\numberwithin{equation}{section}
\newtheorem{theorem}{Theorem}[section]
\newtheorem{proposition}[theorem]{Proposition}
\newtheorem{lemma}[theorem]{Lemma}
\theoremstyle{definition}
\newtheorem{remark}[theorem]{Remark}
\newtheorem*{remark*}{Remark}
\newtheorem*{proposition*}{Proposition}
\newtheorem*{acknowledgement*}{Acknowledgement}
\newtheorem{example}[theorem]{Example}
\newtheorem{definition}[theorem]{Definition}
\newcommand\nnorm[1]{\left\lVert#1\right\rVert}
\newcommand\norm[1]{\left\lvert#1\right\rvert}
\newcommand{\overbar}[1]{\mkern 1.5mu\overline{\mkern-1.5mu#1\mkern-1.5mu}\mkern 1.5mu}
\newcommand{\U}{\mathrm{U}}
\newcommand{\C}{\mathbb{C}}
\newcommand{\F}{\mathbb{F}}
\newcommand{\Q}{\mathbb{Q}}
\newcommand{\Gal}{\mathrm{Gal}}
\newcommand{\Fqt}{\mathbb{F}_q[t]}
\newcommand{\Frob}{\mathrm{Frob}}
\newcommand*\from{\colon}
\begin{document}
\author{Leonhard Hochfilzer}
\date{\today}
\subjclass[2010]{primary: 11T55; secondary: 11M50, 11M38}
\keywords{$L$-functions, Galois representations}

\address{Mathematisches Institut, Bunsenstra{\ss}e 3-5, 37073 G\"ottingen, Germany.}
\email{leonhard.hochfilzer@mathematik.uni-goettingen.de}
\title{Variance of sums in short intervals and $L$-functions in $\mathbb{F}_q[t]$}
\maketitle

\begin{abstract}
Keating and Rudnick studied the variance of the polynomial von Mangoldt function $\Lambda \from \Fqt \to \C$ in arithmetic progressions and short intervals using two equidistribution results by Katz. Hall, Keating and Roditty-Gershon then generalised the result for arithmetic progressions for a von Mangoldt function $\Lambda_\rho$ attached to a Galois representation $\rho \from \Gal\left(\overbar{\F_q(t)}/\F_q(t)\right) \to \mathrm{GL}_m(\overbar{\Q}_\ell)$. We employ a recent equidistribution result by Sawin in order to generalise the corresponding result for short intervals for $\Lambda_\rho$.
\end{abstract}

\tableofcontents

\section{Introduction}

This paper is one of many recent studies of arithmetic quantities in short intervals in the function field setting \cite{bank2015, bank2018sums, keating2014variance, keating2016squarefree, keating2018sums, rodgers2018, rudnick2019angles}. Here we will be interested in the variance of a quantity to be defined below. The common strategy is to express the variance in terms of functions of coefficients of suitable $L$-functions and use an \emph{equidistribution} result in order to reduce these sums to a matrix integral, when taking the limit $q \rightarrow \infty$ as was done in \cite{keating2014variance, keating2016squarefree, keating2018sums, rodgers2018, rudnick2019angles}. In our case, the relevant equidistribution result is provided for us by Sawin \cite{sawin2018equidistribution}.

Let $\mathbb{F}_q$ be the finite field with $q$ elements, where $q = p^k$ for some positive integer $k$ and a rational prime $p$. We will assume the characteristic $p$ to be fixed throughout and when we write $q \rightarrow \infty$ we always mean $k \rightarrow \infty$.
In analogy with the integer case where we are interested in the distribution of prime numbers, one can ask questions about the distribution of irreducible polynomials in $\Fqt$ in short intervals and arithmetic progressions. For this we define the polynomial von Mangoldt function $\Lambda \from \Fqt \to \mathbb{Z}_{\geq 0}$  as
	\begin{equation}
		\Lambda(f) \coloneqq \begin{cases}
			\deg P \quad &\text{if } f = c \cdot P^k \text{ where } P \text{ is irreducible and } c \in \F_q^\times,  \\
			0 \quad &\text{otherwise},
		\end{cases}
	\end{equation}
and for a polynomial $A \in \Fqt$ of degree $n>h$, we define the interval $I(A;h)$ around $A$ of length $h$ to be
\begin{equation}
	I(A;h) \coloneqq \{ f \in \Fqt \mid \nnorm{f-A} \leq q^h \},
\end{equation}
where the usual norm is $\nnorm{f} \coloneqq q^{\deg(f)}$ for $f \neq 0$. Writing $\mathcal{M}_n \subset \Fqt$ for the monic polynomials of degree $n$ we thus define the two quantities
\begin{align}
\nu(A;h) &= \sum_{\substack{f \in I(A,h) \\ f(0) \neq 0}} \Lambda(f),	\quad \text{and} \\
\Psi(n;A,Q) &= \sum_{\substack{f \in \mathcal{M}_n \\ f \equiv A \; \mathrm{mod} \; Q}} \Lambda(f),
\end{align}
which essentially count the number of irreducible polynomials in short intervals and arithmetic progressions respectively. We define the expectation of $\nu(A;h)$ by
\begin{equation}
	\mathbb{E}_{A,n}[\nu(A;h)] = \frac{1}{q^n} \sum_{A \in \mathcal{M}_n}\nu(A;h),
\end{equation}
and the variance of $\nu(A;h)$ by 
\begin{equation}
	\mathrm{Var}_{A,n}[\nu(A;h)] = \frac{1}{q^n} \sum_{A \in \mathcal{M}_n} \norm{\nu(A;h) - \mathbb{E}_{B,n}[\nu(B;h)]}^2.
\end{equation}
The following two theorems were proved by Keating and Rudnick \cite[Theorem 2.1, Theorem 2.2]{keating2014variance}.
\begin{theorem}[Keating and Rudnick] \label{thm:keating_rudnick_short_intervals}
	Let $h$ and $n$ be non-negative integers with $h \leq n-5$. Then
	\begin{equation}
		\lim_{q \to \infty}\frac{1}{q^{h+1}} \mathrm{Var}_{A,n}[\nu(A;h)] = n-h-2.
	\end{equation}
\end{theorem}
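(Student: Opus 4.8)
The plan is to follow the method of Keating and Rudnick: first reduce the variance to a second moment of $L$-function coefficients over a suitable family of Dirichlet characters, then feed this family into an equidistribution theorem and evaluate a matrix integral.

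\textbf{Step 1: from short intervals to characters.} I would use the involution $f\mapsto f^{\vee}$, $f^{\vee}(t):=t^{\deg f}f(1/t)$, which preserves the degree of any polynomial with non-zero constant term and satisfies $\Lambda(f^{\vee})=\Lambda(f)$ there. Reversal turns the short interval $I(A;h)$ around a monic $A$ of degree $n$ into the set of polynomials of degree $n$ with constant term $1$ lying in one fixed residue class modulo $t^{n-h}$, since $I(A;h)$ prescribes precisely the coefficients of $t^{h+1},\dots,t^{n}$ and reversal sends these to the coefficients of $t^{0},\dots,t^{n-h-1}$. The residues modulo $t^{n-h}$ with constant term $1$ form a subgroup $H$ of $(\Fqt/t^{n-h}\Fqt)^{\times}$ of order $q^{n-h-1}$, so orthogonality of the characters of $H$ — equivalently, the \emph{even} Dirichlet characters modulo $t^{n-h}$, those trivial on $\F_q^{\times}$ — gives, after rescaling to monic representatives (compatible with $\Lambda$ and with even $\chi$), with $\Psi(n,\chi):=\sum_{f\in\mathcal{M}_n}\Lambda(f)\chi(f)$,
\begin{equation*}
\nu(A;h)=\frac{1}{q^{n-h-1}}\sum_{\substack{\chi \bmod t^{n-h}\\ \chi \text{ even}}}\conj{\chi}(A^{\vee})\,\Psi(n,\chi).
\end{equation*}
The trivial character produces the expectation, and a second application of orthogonality (the values $A^{\vee}\bmod t^{n-h}$ are equidistributed over $H$ as $A$ runs over $\mathcal{M}_n$, because the relevant coefficients of $A$ are free) kills the cross terms, yielding
\begin{equation*}
\mathrm{Var}_{A,n}[\nu(A;h)]=\frac{1}{q^{2(n-h-1)}}\sum_{\substack{\chi\neq\chi_{0}\bmod t^{n-h}\\ \chi \text{ even}}}\norm{\Psi(n,\chi)}^{2}.
\end{equation*}

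\textbf{Step 2: $L$-functions and the explicit formula.} For $\chi$ even and primitive of conductor $t^{n-h}$, the $L$-function $L(u,\chi)$ is a polynomial which, by Weil, factors as $L(u,\chi)=(1-u)\prod_{j=1}^{N}(1-\sqrt{q}\,e^{i\theta_{j,\chi}}u)$ with $N=n-h-2$: the factor $1-u$ is the trivial zero attached to the place at infinity (this is where ``even'' is used), and $|e^{i\theta_{j,\chi}}|=1$ is the Riemann Hypothesis for curves. Applying $u\,\mathrm{d}/\mathrm{d}u$ to $\log L(u,\chi)$ and reading off the coefficient of $u^{n}$ gives $\Psi(n,\chi)=-1-q^{n/2}\,\mathrm{tr}(\Theta_{\chi}^{n})$, where $\Theta_{\chi}\in\U(N)$ is the unitarised Frobenius conjugacy class, so that $\norm{\Psi(n,\chi)}^{2}=q^{n}\norm{\mathrm{tr}(\Theta_{\chi}^{n})}^{2}+O(Nq^{n/2})$. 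Imprimitive $\chi$ satisfy $\Psi(n,\chi)=\Psi(n,\chi^{*})$ for the inducing primitive character $\chi^{*}$, and their number ($\asymp q^{n-h-2}$) times the trivial bound $\norm{\Psi(n,\chi^{*})}^{2}\ll (n-h)^{2}q^{n}$ is negligible next to the main term, which will have size $\asymp(n-h)\,q^{2n-h-1}$. Dividing by $q^{h+1}$ therefore leaves, up to $o(1)$ as $q\to\infty$, the average of $\norm{\mathrm{tr}(\Theta_{\chi}^{n})}^{2}$ over the primitive even characters $\chi$ modulo $t^{n-h}$.

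\textbf{Step 3: equidistribution and the matrix integral.} By Katz's equidistribution theorem for this family — the step requiring $h\le n-5$, i.e.\ $N\ge 3$, so that the geometric monodromy group is all of $\U(N)$ — the conjugacy classes $\Theta_{\chi}$ become equidistributed in $\U(N)$ as $q\to\infty$. The average above then converges to $\int_{\U(N)}\norm{\mathrm{tr}(g^{n})}^{2}\,\mathrm{d}g$, which equals $\min(n,N)$ by the Diaconis--Shahshahani moment identity; since $N=n-h-2\le n$ this is $n-h-2$, as asserted.

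Steps 1 and 2 are careful but essentially mechanical (the involution, orthogonality, and the explicit formula with its standard archimedean zero). \emph{The main obstacle is Step 3}: isolating the precise equidistribution statement and verifying its hypotheses — pinning down that the symmetry type is $\U(N)$ with $N=n-h-2$ and the exact range in which the monodromy is full (the source of the constraint $h\le n-5$). In the generalisation to $\Lambda_{\rho}$ developed below, this is exactly the point at which Sawin's theorem \cite{sawin2018equidistribution} replaces Katz's, and checking its applicability — the correct notion of conductor, the monodromy computation, and the resulting matrix integral — is the crux of the argument.
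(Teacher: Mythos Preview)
Your proposal is correct and follows essentially the same route as the paper (and the original Keating--Rudnick argument): the involution $f\mapsto f^{*}$ to pass from short intervals to residue classes mod $t^{n-h}$, orthogonality over even characters to obtain $\mathrm{Var}=\frac{1}{q^{2(n-h-1)}}\sum_{\chi\neq\chi_0\text{ even}}|\Psi(n,\chi)|^{2}$, the explicit formula giving $\Psi(n,\chi)=-1-q^{n/2}\mathrm{Tr}(\Theta_{\chi}^{n})$ with $\Theta_{\chi}\in\U(n-h-2)$ for primitive even $\chi$, Katz's equidistribution (the origin of $h\le n-5$), and the Diaconis--Shahshahani integral $\int_{\U(N)}|\mathrm{Tr}(g^{n})|^{2}\,dg=\min(n,N)=n-h-2$. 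The only cosmetic difference is that you dispose of imprimitive characters by a direct counting-and-bounding argument, whereas the paper absorbs them into the ``bad'' characters handled uniformly by Lemma~\ref{lem.mixed_chars_finite}; both work.
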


\begin{theorem}[Keating and Rudnick] \label{thm:keating_rudnick_arith_prog}
	Let $Q$ be a square-free polynomial of positive degree and let $n$ be a positive integer such that $1 \leq \deg Q < n$. Then
	\begin{equation}
		\lim_{q \rightarrow \infty} \frac{1}{q^n}\sum_{\substack{A \; \mathrm{mod} \; Q \\ (A,Q) = 1}} \norm{\Psi(n;A,Q) - \frac{q^n}{\norm{\Phi(Q)}}}^2 = \deg(Q) -1.
	\end{equation}
\end{theorem}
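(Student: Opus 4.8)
The plan is to run the by-now-standard argument used by Keating and Rudnick: expand the variance by orthogonality of Dirichlet characters modulo $Q$, pass to $L$-functions, use the Riemann Hypothesis over function fields to realise the relevant coefficient sums as traces of powers of unitarised Frobenius matrices, and finally feed in Katz's equidistribution theorem together with a moment computation on the unitary group.

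First I would set $\psi(n,\chi) := \sum_{f \in \mathcal{M}_n}\Lambda(f)\chi(f)$ for a Dirichlet character $\chi$ modulo $Q$. Orthogonality of the characters of $(\Fqt/Q)^\times$ writes $\Psi(n;A,Q)$, for $(A,Q)=1$, as a sum of the $\psi(n,\chi)$; since $\sum_{f\in\mathcal{M}_n}\Lambda(f)=q^n$, the centering $q^n/\norm{\Phi(Q)}$ agrees with the mean of $\Psi(n;A,Q)$ over residues coprime to $Q$ up to an error $O(n\deg Q/\norm{\Phi(Q)})$, negligible after division by $q^n$. A second use of orthogonality then gives
\[
\frac{1}{q^n}\sum_{\substack{A\bmod Q\\(A,Q)=1}}\Bigl\lvert\Psi(n;A,Q)-\frac{q^n}{\norm{\Phi(Q)}}\Bigr\rvert^2 \;=\; \frac{1}{\norm{\Phi(Q)}\,q^n}\sum_{\chi\neq\chi_0}\norm{\psi(n,\chi)}^2 + o(1).
\]
Writing each non-trivial $\chi$ in terms of its inducing primitive character $\chi^*$ of conductor $Q^*\mid Q$ one has $L(u,\chi)=L(u,\chi^*)\prod_{P\mid Q,\,P\nmid Q^*}(1-\chi^*(P)u^{\deg P})$; as there are only $O(q^{\deg Q-1})$ imprimitive $\chi$ and $\psi(n,\chi)=O(q^{n/2}\deg Q)$ for each, the imprimitive contribution is $o(1)$, so only primitive characters survive.

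Next I would invoke the $L$-function machinery. For $\chi$ primitive modulo the square-free $Q$, $L(u,\chi)$ is a polynomial of degree $\deg Q-1$; when $\chi$ is \emph{odd} (non-trivial on $\F_q^\times$) all of its inverse roots have absolute value $q^{1/2}$ by Weil, so $L(u,\chi)=\det(1-q^{1/2}u\,\Theta_\chi)$ for a unitary matrix $\Theta_\chi\in\U(\deg Q-1)$ well-defined up to conjugacy, and differentiating the logarithm yields the explicit formula $\psi(n,\chi)=-q^{n/2}\,\mathrm{tr}(\Theta_\chi^n)$. The \emph{even} primitive characters number only $O(\norm{\Phi(Q)}/q)$ and, carrying one trivial inverse root, obey the same size bound; hence they too contribute $o(1)$, and since $Q$ is square-free the number of primitive odd characters modulo $Q$ is $(1+o(1))\norm{\Phi(Q)}$. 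Therefore the right-hand side of the display equals $\dfrac{1}{\#\{\text{prim.\ odd }\chi\}}\sum_{\chi\ \text{prim.\ odd}}\norm{\mathrm{tr}(\Theta_\chi^n)}^2 + o(1)$.

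Finally I would apply Katz's equidistribution theorem for primitive Dirichlet characters of square-free modulus: as $q\to\infty$ the conjugacy classes $\Theta_\chi$, with $\chi$ ranging over primitive odd characters modulo $Q$, equidistribute in the space of conjugacy classes of $\U(\deg Q-1)$ for Haar measure. Since $U\mapsto\norm{\mathrm{tr}(U^n)}^2$ is a continuous class function, the character average converges to $\int_{\U(N)}\norm{\mathrm{tr}(U^n)}^2\,dU$ with $N=\deg Q-1$, which by the Diaconis--Shahshahani moment identity equals $\min(n,N)$; the hypothesis $\deg Q<n$ forces $n>N$, so the integral is $N=\deg Q-1$, which is the claimed limit. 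The hard part will be the equidistribution input: everything else is bookkeeping once one has the elementary analytic number theory of $\Fqt$ and Weil's Riemann Hypothesis, whereas knowing that the geometric monodromy group of the family $\{L(u,\chi)\}$ is the \emph{full} unitary group --- rather than a symplectic or orthogonal subgroup, which would change the matrix integral --- is precisely Katz's deep theorem, which we cite rather than reprove; the $\rho$-twisted analogue of this monodromy statement, now supplied by Sawin's equidistribution result, is what the rest of the paper develops.
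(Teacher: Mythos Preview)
Your argument is correct and is precisely the original Keating--Rudnick proof: orthogonality reduces the variance to $\frac{1}{|\Phi(Q)|q^n}\sum_{\chi\neq\chi_0}|\psi(n,\chi)|^2$, imprimitive and even primitive characters are negligible, the remaining odd primitive characters give unitarised Frobenii $\Theta_\chi\in\U(\deg Q-1)$ via Weil, and Katz's equidistribution for the square-free-conductor family together with the Diaconis--Shahshahani moment $\int_{\U(N)}|\mathrm{tr}(U^n)|^2\,dU=\min(n,N)$ finishes.

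The paper does not give its own proof of this statement: Theorem~\ref{thm:keating_rudnick_arith_prog} is quoted as a result of Keating and Rudnick, and the only argument offered is the remark (in the Example following Theorem~\ref{thm:main_result_intro_form}) that it is the special case $\rho=\mathbf{1}$ of the Hall--Keating--Roditty-Gershon Theorem~\ref{thm:arithmetic_progressions_galois}, with $w=0$ and $r_{\mathcal{Q}}(\mathbf{1})=\deg Q-1$. Your direct route and the paper's ``specialise the general theorem'' route are of course the same proof under the hood, since Hall--Keating--Roditty-Gershon itself is the $\rho$-twisted version of exactly the orthogonality-plus-Katz argument you wrote out; your write-up simply bypasses the Galois-representation packaging.
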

These results were obtained using two equidistribution results, which Katz proved in \cite{katz2013square_free} and \cite{katz2013witt}, respectively. The two theorems above correspond to and provide further evidence for the well-known conjectures of Goldston-Montgomery \cite{goldston1987pair}  and Hooley \cite{hooley1974distribution} in the number field setting, which make predictions about the variance of rational primes in short intervals and arithmetic progressions, respectively. 

One can consider the same problem, but where we replace the polynomial von Mangoldt function $\Lambda$ with the von Mangoldt function $\Lambda_\rho$ attached to a Galois representation $\rho$ of the absolute Galois group of $K = \F_q(t)$. The following is a rough sketch of how $\Lambda_\rho$ is defined. The precise definition is given in Definition \ref{def.von_mangoldt_rho}.

Let $\mathcal{Z}(s) = \sum_{f \text{ monic}} \frac{1}{\nnorm{f}^s} = \frac{1}{1-q^{1-s}}$ be the zeta function of $\Fqt$. It is not hard to show the following identity
\begin{equation}
-\frac{\mathcal{Z}'(s)}{\mathcal{Z}(s)} = \sum_{\substack{f \in \Fqt \\ f \text{ monic}}}	\frac{\Lambda(f)}{\nnorm{f}^s}. 
\end{equation}
Thus given the Artin $L$-function $L_\rho(s)$ attached to $\rho$, we might naturally define the von Mangoldt function attached to $\rho$ by the coefficients of $-L_\rho'(s)/L_\rho(s)$. 

Once having defined the von Mangoldt function $\Lambda_\rho$ we can analogously to the above define
\begin{align*}
	\Psi_\rho(n;A,Q) &\coloneqq \sum_{\substack{f \equiv A \; \mathrm{mod} \; Q \\ f \in \mathcal{M}_n}}\Lambda_\rho(f), \; \text{and} \\
	\nu_\rho(A;h) &\coloneqq \sum_{\substack{f \in I(A;h) \\ f(0) \neq 0}}\Lambda_\rho(f)
\end{align*}
and the variances of these quantities.  In particular we write
\begin{align*}
	\mathrm{Var}_{A}[\Psi_\rho(n;Q,A)] &= \frac{1}{\norm{\Phi(Q)}} \sum_{\substack{A \; \mathrm{mod} \; Q \\ (A,Q) = 1}} \norm{\Psi(n;A,Q) - \mathbb{E}_{B}[\Psi(n;B,Q)]  }^2, \; \text{and} \\
	\mathrm{Var}_{A,n} \left[ \nu_\rho(A;h) \right] &= \frac{1}{q^n} \sum_{A \in \mathcal{M}_n} \norm{\nu_\rho(A;h) - \mathbb{E}_{B,n}[\nu_\rho(B;h)]}^2
\end{align*}

In this context, Hall, Keating and Roditty-Gershon proved the corresponding result for arithmetic progressions \cite{HKRG17}.

\begin{theorem}[Hall, Keating and Roditty-Gershon] \label{thm:arithmetic_progressions_galois}
	Let $\rho$ be a 'suitably nice' Galois representation with $q$-weight $w$ of $G_K$ depending on a square-free polynomial $Q \in \Fqt$ and write $\Phi(Q)$ for the set of Dirichlet characters of modulus $Q$. Then there exists a positive integer $r_{\mathcal{Q}}(\rho)$  depending on $Q$ and $\rho$ such that
	\begin{equation}
		\lim_{q \rightarrow \infty} \frac{\norm{\Phi(Q)}}{q^{n(1+w)}} \mathrm{Var}_{A}[\Psi_\rho(n;Q,A)] = \mathrm{min}\{n,r_\mathcal{Q}(\rho) \}.
	\end{equation}
\end{theorem}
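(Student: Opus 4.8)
The plan is to follow the three–step pattern that underlies all variance computations of this kind in $\Fqt$: first use orthogonality of Dirichlet characters to turn the variance into an average of squared twisted sums $\psi_\rho(n,\chi)\coloneqq\sum_{f\in\mathcal{M}_n}\Lambda_\rho(f)\chi(f)$; then use the explicit formula to write each such sum as $q^{n(1+w)/2}$ times an $n$-th power trace of a unitary matrix built from the inverse zeros of the twisted $L$-function $L_{\rho\otimes\chi}$; and finally invoke a Katz-style equidistribution result for the family $\{\Theta_\chi\}$ as $q\to\infty$ to replace the character average by a matrix integral, which is then evaluated.

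For the first step I would expand $\Psi_\rho(n;Q,A)=\frac{1}{\norm{\Phi(Q)}}\sum_{\chi}\conj{\chi}(A)\,\psi_\rho(n,\chi)$ and note that the principal-character term equals $\mathbb{E}_B[\Psi_\rho(n;B,Q)]$ up to a contribution of the prime powers $P^k$, $k\ge 2$, in the support of $\Lambda_\rho$, which is of lower order since $\deg Q<n$. Parseval over $A\bmod Q$ then gives
\begin{equation}
\mathrm{Var}_A[\Psi_\rho(n;Q,A)]=\frac{1}{\norm{\Phi(Q)}}\sum_{\chi\neq\chi_0}\norm{\psi_\rho(n,\chi)}^2+(\text{lower order}),
\end{equation}
the error absorbing both the prime-power discrepancy and, subsequently, the imprimitive $\chi$. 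For the second step, for nontrivial $\chi$ the $L$-function $L_{\rho\otimes\chi}(u)$, as a polynomial in $u=q^{-s}$, has some degree $D_\chi$ governed by the conductor of $\rho\otimes\chi$, and by purity of $\rho$ together with Deligne's Riemann Hypothesis all its inverse zeros have absolute value $q^{(1+w)/2}$; hence there is $\Theta_\chi\in\U(D_\chi)$ with $\psi_\rho(n,\chi)=-q^{n(1+w)/2}\,\mathrm{Tr}(\Theta_\chi^n)$, again up to the negligible prime-power terms. After dividing by $q^{n(1+w)}/\norm{\Phi(Q)}$, the theorem reduces to
\begin{equation}
\lim_{q\to\infty}\frac{1}{\norm{\Phi(Q)}}\sum_{\chi\neq\chi_0}\norm{\mathrm{Tr}(\Theta_\chi^n)}^2=\min\{n,r_\mathcal{Q}(\rho)\}.
\end{equation}

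For the final step I would view $\chi\mapsto\Theta_\chi$ as arising from a lisse sheaf on the parameter space of Dirichlet characters of modulus $Q$, invoke a big-monodromy theorem of Katz type to identify its geometric monodromy group $G$ as the relevant classical group, and note that the ``bad'' locus of $\chi$ (where $L_{\rho\otimes\chi}$ drops degree or the monodromy fails to be maximal) is a proper closed subset contributing $o(1)$ after normalisation. Deligne's equidistribution theorem then replaces the character average by $\int_G\norm{\mathrm{Tr}(g^n)}^2\,dg$, and a Diaconis–Shahshahani style computation evaluates this as $\min\{n,r_\mathcal{Q}(\rho)\}$, with $r_\mathcal{Q}(\rho)$ the generic value of $D_\chi$, i.e.\ the generic conductor degree of $\rho\otimes\chi$ minus the number of ``trivial'' zeros imposed by the functional equation.

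I expect the main obstacle to be precisely this last step: pinning down the monodromy group $G$ and, in particular, proving that it is as large as possible rather than accidentally smaller, and computing $r_\mathcal{Q}(\rho)$ correctly, which requires careful bookkeeping of the conductor of $\rho\otimes\chi$ and of the zeros forced by the functional equation, as well as showing that the degenerate subfamily of characters is genuinely lower-dimensional. By contrast, the remaining pieces — controlling the prime-power part of $\Lambda_\rho$, dealing with imprimitive characters, and verifying that all error terms are $o\!\left(q^{n(1+w)}/\norm{\Phi(Q)}\right)$ — are routine in the range $\deg Q<n$.
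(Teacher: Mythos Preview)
This theorem is not proved in the present paper; it is quoted from \cite{HKRG17} as background, so there is no proof here to compare against. That said, your three-step outline---orthogonality to reduce the variance to $\sum_{\chi\neq\chi_0}\lvert\psi_\rho(n,\chi)\rvert^2$, the explicit formula plus purity to write $\psi_\rho(n,\chi)=-q^{n(1+w)/2}\mathrm{Tr}(\Theta_\chi^n)+O(1)$, and equidistribution to convert the character average into $\int_{\U(r)}\lvert\mathrm{Tr}(g^n)\rvert^2\,dg=\min\{n,r\}$---is exactly the strategy of \cite{HKRG17}, and it is also precisely the scheme this paper implements for its own main result (Theorem~\ref{thm:main_result_full_form}) in the short-interval setting.

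Two small points where your sketch is looser than the actual argument. First, in step~2 it is not true that every nontrivial $\chi$ yields a pure polynomial $L$-function: both \cite{HKRG17} and this paper split characters into \emph{good}, \emph{mixed}, and \emph{heavy}, and one must show separately that mixed characters are $O(1)$ in number (a cohomological lemma) and that heavy characters are contained in $\{\chi_0\}$ under the ``suitably nice'' hypotheses on $\rho$; Deligne's theorem alone gives only mixedness of weight $\le w+1$. Second, for square-free $Q$ the equidistribution input is Katz's theorem, and the monodromy group comes out to be the full $\U(r_\mathcal{Q}(\rho))$, so there is no need to contemplate other classical groups---your worry about ``accidentally smaller'' monodromy is handled by the hypotheses packaged into ``suitably nice''.
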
 
The main result of this manuscript is the corresponding asymptotic for short intervals using a recent equidistribution result by Sawin \cite{sawin2018equidistribution}. Using this, in the end one obtains the following matrix integral
\begin{equation*}
	\int_{\U(S)} \norm{\mathrm{Tr}(g^n)}^2 dg,
\end{equation*}
which can be shown to equal $\mathrm{min}\{ n,S \}$ \cite[Theorem 2.1]{diaconis2001linear}.
\begin{theorem} \label{thm:main_result_intro_form}
	Let $\rho$ be a 'suitably nice' Galois representation of $G_K$ of $q$-weight $w$ depending on $Q = t^{n-h}$ where $h$ is a nonnegative integer such that $h \leq n-5$. Then there exists a positive integer $s_{\mathcal{Q}}(\rho)$ depending on $Q$ and $\rho$ such that
	\begin{equation}
		\lim_{q \to \infty} \frac{1}{q^{nw+h+1}} \mathrm{Var}_{A,n} \left[ \nu_\rho(A;h) \right] = \mathrm{min}\{n,s_\mathcal{Q}(\rho) \}.
	\end{equation}
\end{theorem}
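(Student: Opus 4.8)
Our plan is to follow the strategy of Keating--Rudnick \cite{keating2014variance} and Hall--Keating--Roditty-Gershon \cite{HKRG17}, with Sawin's theorem \cite{sawin2018equidistribution} supplying the equidistribution input in place of the Katz-type results used there.

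\textbf{Step 1 (passage to short-interval character sums).} We first use the involution $f(t)\mapsto f^{\dagger}(t):=t^{\deg f}f(1/t)$ to rewrite the short-interval condition $\nnorm{f-A}\le q^{h}$ as the congruence $f^{\dagger}\equiv A^{\dagger}\pmod{t^{\,n-h}}$. Under this dictionary the short intervals inside $\mathcal{M}_n$ become the fibres of the map $\mathcal{M}_n\to H$, $A\mapsto A^{\dagger}\bmod t^{\,n-h}$, onto the subgroup $H\subset(\Fqt/t^{\,n-h})^{\times}$ of residues congruent to $1$ modulo $t$, which has order $q^{\,n-h-1}$. Expanding the indicator of a fibre in the characters of $H$ --- the \emph{short-interval} (or \emph{even}) characters modulo $t^{\,n-h}$ --- and using Parseval's identity yields the exact formula
\begin{equation*}
	\mathrm{Var}_{A,n}\!\left[\nu_\rho(A;h)\right]=\frac{1}{q^{\,2(n-h-1)}}\sum_{\chi\neq\chi_{0}}\bigl|\psi_\rho(n,\chi)\bigr|^{2},\qquad \psi_\rho(n,\chi):=\sum_{\substack{f\in\mathcal{M}_n\\ f(0)\neq 0}}\Lambda_\rho(f)\,\chi(f^{\dagger}),
\end{equation*}
where the sum runs over the $q^{\,n-h-1}-1$ nontrivial short-interval characters. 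As in \cite{keating2014variance}, the hypothesis $h\le n-5$ serves to keep the later steps in the range where Sawin's theorem applies; concretely it ensures that the generic $L$-function appearing below has sufficiently large degree.

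\textbf{Step 2 ($L$-function expansion and the generic degree).} For each nontrivial short-interval character $\chi$ we recognise $\sum_{m\ge 1}\psi_\rho(m,\chi)u^{m}$ (with $u=q^{-s}$) as, up to the finitely many Euler factors at the places $0$, $\infty$ and the ramified places of the twist, the logarithmic derivative of the $L$-function $L(\rho\otimes\chi,u)$ of the twist of $\rho$ by $\chi$. By Grothendieck's rationality theorem this $L$-function is, after removing those trivial factors, a polynomial in $u$ of degree $D(\rho,\chi)$ computable from the conductor of $\rho\otimes\chi$ by the Grothendieck--Ogg--Shafarevich formula, and by Deligne's Riemann Hypothesis its inverse roots all have absolute value $q^{(w+1)/2}$; writing these as $q^{(w+1)/2}$ times the eigenvalues of a unitary conjugacy class $\Theta_\chi\in\U(D(\rho,\chi))$ we obtain
\begin{equation*}
	\psi_\rho(n,\chi)=-\,q^{\,n(w+1)/2}\,\mathrm{Tr}\!\bigl(\Theta_\chi^{\,n}\bigr)+O\!\bigl(q^{\,nw/2}\bigr).
\end{equation*}
The ``suitable niceness'' of $\rho$ should be precisely the package of hypotheses under which the conductor of $\rho\otimes\chi$, and hence $D(\rho,\chi)$, equals a fixed integer $s_{\mathcal{Q}}(\rho)$ for all $\chi$ outside a set of at most $O(q^{\,n-h-2})$ exceptional characters (those degenerate at $t$ or at $\infty$). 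On the exceptional set the trivial bound $\bigl|\psi_\rho(n,\chi)\bigr|\ll n\,q^{\,n(w+1)/2}$ still holds, so these characters contribute $\ll n^{2}q^{\,n(w+1)}q^{\,n-h-2}$, which is $o\bigl(q^{\,nw+h+1}q^{\,2(n-h-1)}\bigr)$ and may be discarded; likewise the $O(q^{\,nw/2})$ error above is negligible after normalisation by $q^{\,nw+h+1}$.

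\textbf{Step 3 (equidistribution and the matrix integral).} It remains to compute
\begin{equation*}
	\lim_{q\to\infty}\frac{1}{q^{\,n-h-1}}\sum_{\chi}\bigl|\mathrm{Tr}\!\bigl(\Theta_\chi^{\,n}\bigr)\bigr|^{2},
\end{equation*}
the sum taken over the non-exceptional short-interval characters. Here we invoke Sawin's equidistribution theorem \cite{sawin2018equidistribution}: for the family $\{\rho\otimes\chi\}_{\chi}$ of twists of $\rho$ by short-interval characters modulo $t^{\,n-h}$, the geometric monodromy group is the full general linear group $\mathrm{GL}_{s_{\mathcal{Q}}(\rho)}$, so the conjugacy classes $\Theta_\chi$ become equidistributed in $\U(s_{\mathcal{Q}}(\rho))$ with respect to Haar measure as $q\to\infty$. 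Therefore the above limit equals $\int_{\U(s_{\mathcal{Q}}(\rho))}\bigl|\mathrm{Tr}(g^{n})\bigr|^{2}\,dg=\min\{n,s_{\mathcal{Q}}(\rho)\}$ by \cite[Theorem 2.1]{diaconis2001linear}. Collecting the powers of $q$ --- the factor $q^{\,n(w+1)}$ from Step 2, the $q^{\,n-h-1}$ characters, against the $q^{\,2(n-h-1)}$ in the denominator of Step 1 --- produces $q^{\,n(w+1)-(n-h-1)}=q^{\,nw+h+1}$, and hence $\lim_{q\to\infty}q^{-(nw+h+1)}\,\mathrm{Var}_{A,n}[\nu_\rho(A;h)]=\min\{n,s_{\mathcal{Q}}(\rho)\}$.

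\textbf{Main obstacle.} The crux of the proof is Step 3 together with the degree analysis in Step 2: one has to translate ``suitably nice'' into a form under which the family $\{\rho\otimes\chi\}$ satisfies the hypotheses of Sawin's theorem and, crucially, verify that the resulting monodromy group is the \emph{full} linear group rather than some proper (e.g.\ symplectic or orthogonal) subgroup --- any such subgroup would change the matrix integral and so the final answer --- and one has to pin down the integer $s_{\mathcal{Q}}(\rho)$ by a local conductor computation for $\rho\otimes\chi$ at the places $t$ and $\infty$. By contrast the Parseval reduction of Step 1 and the weight bookkeeping of Step 2 are routine adaptations of \cite{keating2014variance,HKRG17}.
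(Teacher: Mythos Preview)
Your outline matches the paper's proof almost step for step: the involution/Parseval reduction in Step~1 is exactly the paper's Sections~7.1--7.2, and the weight bookkeeping in Step~2 is the content of Sections~3--6 and~8.

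There is, however, one genuine oversimplification in Step~3. Sawin's theorem, as stated and used in the paper (Theorem~\ref{thm:equidistribution}), does \emph{not} assert that the classes $\Theta_\chi$ become Haar-equidistributed in $\U(S)$. What it gives is the weaker statement
\[
\frac{1}{|\Phi^*(Q)^{ev}_{\rho\;\mathrm{good}}|}\sum_{\varphi} f(\theta_{\rho,\varphi}) \;-\; \frac{1}{|\Phi^*(Q)^{ev}_{\rho\;\mathrm{good}}|}\sum_{\varphi} \langle f\rangle\bigl(\det\theta_{\rho,\varphi}\bigr)\;\longrightarrow\;0,
\]
i.e.\ equidistribution only in the fibres of $\det\colon \U(S)\to\U(1)$, with no control on the determinants themselves. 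To conclude, the paper needs the additional Lemma~\ref{lem:mean_of_traces}: for $f(g)=|\mathrm{Tr}(g^n)|^2$ the mean $\langle f\rangle$ is \emph{constant} on $\U(1)$ (because $f$ is invariant under the centre of $\U(S)$), and that constant equals $\min\{n,S\}$. Your ``full $\mathrm{GL}$ monodromy $\Rightarrow$ Haar on $\U(S)$'' shortcut skips this; without it the argument is incomplete.

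A second, smaller point: in Step~2 you invoke Deligne to say the inverse roots have absolute value \emph{equal to} $q^{(w+1)/2}$. Deligne only gives mixedness of weight $\le w+1$; purity for all but $O(1)$ characters is a separate input (Lemma~\ref{lem.mixed_chars_finite}, coming from \cite[Lemma~2.7]{sawin2018equidistribution}), and the paper further \emph{assumes} that the heavy characters satisfy $\Phi(Q)^{ev}_{\rho\;\mathrm{heavy}}\subseteq\{\varphi_{tr}\}$. Your $O(q^{n-h-2})$ bound on ``exceptional'' characters together with the trivial bound would suffice numerically, but you have not justified that exceptional (wrong degree) coincides with non-pure, nor that the heavy set is this small.
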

Later we will make precise what we mean by 'suitably nice', we will define what it means for $\rho$ to have $q$-weight $w$ and we will define the integer $s_{\mathcal{Q}}(\rho)$ once the necessary notions have been introduced. For reference, the above theorem is stated in its complete form and with all assumptions in Theorem~\ref{thm:main_result_full_form}, respectively.
\begin{example}
By taking $\rho = \bm{1}$ it is possible to deduce Theorem~\ref{thm:keating_rudnick_short_intervals} and Theorem~\ref{thm:keating_rudnick_arith_prog} from Theorem~\ref{thm:main_result_intro_form} and Theorem~\ref{thm:arithmetic_progressions_galois}, respectively. In fact,  then  $w=0$ and  one can check using \cite[Lemma 4.10]{sawin2018equidistribution}  $r_{\mathcal{Q}}(\rho) = \deg Q -1$ and $s_{\mathcal{Q}}(\rho) = n-h-2$.
\end{example}

\begin{example} \label{ex.legendre_curve}
	Let $E$ be the Legendre curve 
	\begin{equation*}
		E \colon \; y^2 = x(x-1)(x-t)
	\end{equation*}
	over $\mathbb{F}_{q_0}(t)$ where the characteristic of $\mathbb{F}_{q_0}$ exceeds $3$. Its associated Tate module defines a tamely ramified rank $2$ Galois representation $\rho_E$. Then Lemma 4.10 and Example 4.11 in \cite{sawin2018equidistribution} show 
	\begin{equation*}
		s_{\mathcal{Q}}(\rho_E) = 2(n-h-1).
	\end{equation*}
	It can be shown that $\rho$ satisfies the conditions of Theorem~\ref{thm:main_result_intro_form}, and in this case $\rho$ has weight $w = 1$. Thus  for the variance of the associated von Mangoldt function $\Lambda_{\rho_E}$ we obtain
	\begin{equation*}
		\lim_{q \to \infty} \frac{1}{q^{n+h+1}} \mathrm{Var}_{A,n} \left[ \nu_{\rho_E}(A;h) \right] = \mathrm{min}\{n, 2(n-h-1) \}.
	\end{equation*}
\end{example}

\subsection*{Comparison with number field setting}
Let $F$ be a primitive $L$-function in the Selberg class of degree $d_F$. One can define the von-Mangoldt function $\Lambda_F$ attached to $F$ via the following equation
\begin{equation*}
	-\frac{F'(s)}{F(s)} = \sum_{n=1}^\infty \frac{\Lambda_F(n)}{n^s}.
\end{equation*}
Writing
\begin{equation*}
	\psi_F(x) = \sum_{n \leq x} \Lambda_F(x),
\end{equation*}
we expect a general prime number theorem to hold
\begin{equation*}
	\psi_F(x) = m_F x + o(x),
\end{equation*}
where $m_F$ is the order of the pole of $F(s)$ at $s=1$. We study the variance of the von Mangoldt function $\Lambda_F$ by considering
\begin{equation} \label{eq.variance_general_L}
	\tilde{V}_F(X,h) = \int_1^X \norm{\psi_F(x+h) - \psi_F(x) - m_Fh}^2dx.
\end{equation}
If one takes $F(s) = \zeta(s)$, the Riemann Zeta function, then the expression $\tilde{V}_F(X,h)$ measures the variance of the number of rational primes in short intervals.  For general $F$ the quantity defined in \eqref{eq.variance_general_L} has been studied in \cite{bui2016variance}. 
There it is shown under the Generalised Riemann Hypothesis (GRH) that the analogous results in the number field setting are equivalent to extensions of pair correlation conjectures regarding the zeroes of $F(s)$ on the critical line. In particular, assuming GRH and these pair correlation conjectures the following was shown \cite[Theorem C1, Theorem C2]{bui2016variance}. We write $\mathfrak{q}_F$ for the conductor of $F(s)$ and $\gamma_0$ for the Euler-Mascheroni constant.

Let $\varepsilon > 0$.  If $0<B_1 < B_2 \leq B_3 < 1/d_F$ then there exists some $c>0$ such that
\begin{multline} \label{eq.number_field_degree_1}
	\tilde{V}_F(X,h) = hX \left(  d_F \log \frac{X}{h} + \log \mathfrak{q}_F - (\gamma_0 + \log 2 \pi)d_F \right) \\
	+ O_\varepsilon \left( hX^{1+\varepsilon}(h/X)^{c/3} \right) + O_\varepsilon \left( hX^{1+\varepsilon} \left( hX^{B_1-1} \right)^{1/3(1-B_1)} \right)
\end{multline}
uniformly for $X^{1-B_3} \ll h \ll X^{1-B_2}$. 

If $1/d_F<B_1 < B_2 \leq B_3 < 1$ then there exists some $c > 0$ such that
\begin{multline} \label{eq.number_field_degree_large}
	\tilde{V}_F(X,h) = \frac{1}{6} hX \left(  6\log X   - (3 + 8 \log 2) \right) \\
	+ O_\varepsilon \left( hX^{1+\varepsilon}(h/X)^{c/3} \right) + O_\varepsilon \left( hX^{1+\varepsilon} \left( hX^{B_1-1} \right)^{1/3(1-B_1)} \right)
\end{multline}
uniformly for $X^{1-B_3} \ll h \ll X^{1-B_2}$.

If $d_F = 1$ then the second condition is never satisfied so $\tilde{V}_F(X,h)$ is always given by \eqref{eq.number_field_degree_1}. However, if $d_F \geq 2$ there are two different regimes governing the growth of $\tilde{V}_F(X,h)$ depending on the length of the interval $h$. Note that the asymptotic behaviour of \eqref{eq.number_field_degree_1} and \eqref{eq.number_field_degree_large} are qualitatively different; in the range governed by \eqref{eq.number_field_degree_1} the leading term of $\tilde{V}_F(X,h)/Xh$ is proportional to $\log h$, while in the range governed by \eqref{eq.number_field_degree_large} the leading term of $\tilde{V}_F(X,h)/Xh$ is independent of $h$

This corresponds to what we obtain in Theorem \ref{thm:main_result_intro_form}; if the degree of the considered $L$-function exceeds $1$ then the leading order coefficient is governed by two different regimes depending on the length of the interval. Example~\ref{ex.legendre_curve} provides an explicit case in which this can be clearly seen.

\subsection*{Acknowledgements}
The author would like to thank Jon Keating for suggesting this area of research and for many useful comments. Further the author is grateful to Chris Hall for providing help in Section \ref{section.weights_purity} and pointing out some relevant results. Finally the author thanks Theo Assiotis, Ofir Gorodetsky, Edva Roditty-Gershon, Zeev Rudnick, Will Sawin and Damaris Schindler for helpful conversations and comments.

\section{Preliminaries} \label{sec.preliminaries}
Fix a prime $p$ and a prime power $q_0 = p^r$ for some $r$.  When we write $q \rightarrow \infty$ we  always mean $q = q_0^k$ and $k \rightarrow \infty$.
 Consider the field $K = \mathbb{F}_q(t)$ of rational functions with coefficients in $\mathbb{F}_q$. Fix an algebraic closure $\overbar{K}$ of $K$ and a separable closure $K_s$ of $K$. 
 Denote by $G_K = \Gal(K_s/K)$ the \emph{absolute Galois group of} $K$.

Let $\mathcal{P}$ be the set of places of $K$.  If we 
 and let $\mathcal{I}$ be the monic irreducible polynomials in $\Fqt$. By Ostrowski's Theorem we have a correspondence correspondence 
\begin{align*}
     \mathcal{I} \cup \{ \infty \} &\longleftrightarrow  \mathcal{P} \quad \quad  \\
	P &\longrightarrow v_P \\
P_v &\longleftarrow v
\end{align*}

Given a subset of places $\mathcal{Q} \subset \mathcal{P}$ we denote by $K_{\mathcal{Q}} \subseteq K_s$ the maximal subextension subject to all places in $\mathcal{P} \setminus \mathcal{Q}$ being unramified. We denote the Galois group by
\begin{equation}
	G_{K,\mathcal{Q}} \coloneqq \Gal(K_{\mathcal{Q}}/K)
\end{equation}
Given a place $v$ we define as usual 
	\begin{equation*}
		\mathcal{O}_v = \{ x \in K \mid v(x) \leq 1 \},
	\end{equation*} 
	which is a ring with maximal ideal
	\begin{equation*}
		\mathfrak{m}_v = \{ x \in K \mid v(x) < 1\}.
	\end{equation*}
	We define the residue class field to be
\begin{equation}
	\kappa_v \coloneqq \mathcal{O}_v/\mathfrak{m}_v,
\end{equation}
and the \emph{degree}  of $v$ is defined to be 
\begin{equation}
	d_v \coloneqq [\kappa_v : \F_q].
\end{equation}
Let $v$ be a place on $K = \F_q(t)$, and fix an extension of places $w\vert v$ on $K_s$. By definition of the decomposition group $D_w$, for any $\sigma \in D_w$ we have $\sigma(\mathcal{O}_w) = \mathcal{O}_w$ and $\sigma(\mathfrak{m}_w) = \mathfrak{m}_w$. Further, since $\sigma \vert_K = \mathrm{id}$ we also have $\sigma \vert_{\kappa_v} = \mathrm{id}$, so that any $\sigma \in D_w$ induces a $\kappa_v$-automorphism $\overbar{\sigma}$ as follows
\begin{equation}
	\overbar{\sigma} \from \kappa_w \to \kappa_w, \quad x \; \; \mathrm{mod} \; \mathfrak{m}_w \mapsto \sigma(x)  \; \mathrm{mod} \; \mathfrak{m}_w.
\end{equation}
Therefore we obtain a surjective homomorphism
\begin{equation}
	D_w \to \Gal(\kappa_w/\kappa_v),
\end{equation}
with kernel being the inertia group $I_w$. If we write $G_w = D_w/I_w$ we thus have
\begin{equation}
	G_w \cong \Gal(\kappa_w/ \kappa_v).
\end{equation}
From a computation above, we know that $\kappa_v \cong \F_{q^{d_v}}$, and since $K_s$ is the separable closure it is not hard to see that we must have $\kappa_w \cong \overline{\F}_{q^{d_v}}$. As $\overline{\F}_{q^{d_v}}$ is the separable closure of $\F_{q^{d_v}}$, 
we therefore get
\begin{equation}
	G_w \cong \Gal(\overline{\F}_{q^{d_v}}/ \F_{q^{d_v}}).
\end{equation}
Consider $\tau \in \Gal \left(\overline{\F}_{q^{d_v}}/ \F_{q^{d_v}}\right)$ given by $\tau = \left(a \mapsto (a)^{q^{d_v}}\right)$, for $a \in \overline{\F}_{q^{d_v}}$.
By the isomorphism above, there exists an element in $G_w$ corresponding to $\tau$, the so-called \emph{Frobenius element}  which we denote by $\Frob_w \in G_w$. Clearly there is a surjective map $D_w \twoheadrightarrow G_w$ with Kernel $I_w$, and we denote the preimage of $\Frob_w$ under this map by $\Frob_w(I_w)$.

If we had two different places $w, w'$ of $K_s$ extending  $v$ then it is not hard to see that there exists $\sigma \in G_K$ such that simultaneously
\begin{equation}
	\sigma^{-1}\Frob_w \sigma = \Frob_{w'}, \quad \sigma^{-1} G_w \sigma = G_{w'}, \quad \text{and} \quad \sigma^{-1} I_w \sigma = I_{w'}.
\end{equation}
In the following we will only be interested in the determinant or the trace of the Frobenius automorphism, and so any extension of $v$ works equally well. 
Therefore, since $\Frob_w$ is determined by $v$ up to conjugacy we will abuse notation and write $v=w$. \section{Artin $L$-functions} \label{sec:artin_l}
In this section we construct that $L$-function attached to a Galois representation of $\Gal(K_s/K)$. This will be our central tool when defining $\Lambda_\rho$. 
\subsection{$L$-functions}
Let $\ell$ be a prime different to $p$, and consider an algebraic closure $\overbar{\Q}_\ell$ of $\Q_\ell$. Let $V$ be a finite dimensional $\overbar{\Q}_\ell$ vector space, and let $\mathcal{S} \subset \mathcal{P}$ be a finite subset of places of $K$. Consider a continuous homomorphism
\begin{equation}
	\rho \from G_{K,\mathcal{S}} \to \mathrm{GL}(V).
\end{equation}
Now fix a place $v \in \mathcal{P}$, and define
\begin{equation}
	V_v \coloneqq \{ \mathbf{x} \in V \mid \rho(\sigma)(\mathbf{x}) = \mathbf{x}, \text{ for all } \sigma \in I_v \}.
\end{equation}
By construction, the inertia subgroup $I_v$ acts trivially on $V_v$, and the action by the decomposition group $D_v$ preserves $V_v$. 
Therefore $\rho$ induces a representation $\rho_v$ on the quotient $G_v = D_v/I_v$,
\begin{equation}
	\rho_v \from G_v \to \mathrm{GL}(V_v).
\end{equation}
We will now define the $L$-function \emph{attached}  to the representation $\rho$. 

\begin{definition}
	Let $\mathcal{Q} \subset \mathcal{P}$ be a finite subset of places of $K$. 
	Define the \emph{local Euler factor}   of $\rho$ at a place $v$ to be
	 the inverse characteristic polynomial of $\rho(\Frob_v) \in V_v$, that is
\begin{equation} \label{local_euler_factor_def}
	L(T,\rho_v) \coloneqq \det(I - T \rho_v(\Frob_v)\mid V_v) \in \overbar{\Q}_\ell[T].
\end{equation} 
	We define the \emph{partial (Artin) $L$-function attached to $\rho$}   by
	\begin{equation} \label{partial_l_defn}
		L_{\mathcal{Q}}(T,\rho) \coloneqq \prod_{v \notin \mathcal{Q}}L(T^{d_v},\rho_v)^{-1} \in \overbar{\Q}_\ell \llbracket  T \rrbracket,
	\end{equation} 
	and we define the \emph{complete (Artin) $L$-function attached to $\rho$}   by
	\begin{equation} \label{complete_l_defn}
		L(T,\rho) \coloneqq \prod_{v \in  \mathcal{P}}L(T^{d_v},\rho_v)^{-1} \in \overbar{\Q}_\ell \llbracket  T \rrbracket,
	\end{equation}
	where $\overbar{\Q}_\ell \llbracket  T \rrbracket$ is the ring of formal power series with coefficients in $\overbar{\Q}_\ell$.
\end{definition}

Using the theory of middle extension sheaves, one can deduce from Deligne's theorem that both the partial and the complete $L$-functions of $\rho$ are rational functions \cite[3.4]{HKRG17}. Therefore it makes sense to talk about $\deg(L_\mathcal{Q}(T,\rho))$ and $\deg(L(T,\rho))$.

\subsection{Trace formula} \label{subsec:trace_formulas}

We now want to compute the logarithmic derivative of $L(T,\rho)$ in order to define a von Mangoldt function $\Lambda_\rho$. Note that by definition of the local Euler factor \eqref{local_euler_factor_def}, we have 
\begin{equation}
	L(T,\rho_v) = (1-\lambda_1T) \cdots (1-\lambda_{d_v}T),
\end{equation}
where $\lambda_i \in \overbar{\Q}_\ell$ are the eigenvalues of $\rho_v(\Frob_v)$. Therefore computing the logarithmic derivative of $L(T,\rho_v)$ we get
\begin{equation}
	T \frac{d}{dT}\log L(T,\rho_v)^{-1} = T\sum_{k=1}^{d_v}\lambda_k \left(\frac{\prod_{i\neq k}(1-\lambda_i T)}{\prod_{j=1}^{d_v}(1-\lambda_j T)}\right) 
	= T\sum_{k=1}^{d_v} \lambda_k (1-\lambda_kT)^{-1}.
\end{equation}
From the above computation we obtain
\begin{equation}
	T \frac{d}{dT}\log L(T,\rho_v)^{-1} = \sum_{m=1}^{\infty}\left( \sum_{k=1}^{d_v}\lambda_k^m \right)T^m.
\end{equation}
Therefore, defining
\begin{equation}\label{Lefschetz_trace}
	a_{\rho,v,m} \coloneqq \mathrm{Tr}(\rho_v(\Frob_v)^m \mid V_v),
\end{equation}
we can rewrite this as 
\begin{equation} \label{local_l_derivative}
	T \frac{d}{dT}\log L(T,\rho_v)^{-1} = \sum_{m=1}^{\infty}a_{\rho,v,m}T^m.
\end{equation}
Further, if we let $\mathcal{P}_d \subset \mathcal{P}$ be the places of degree $d$, and define
\begin{equation}
	b_{\rho,n} \coloneqq \sum_{md = n}\sum_{v \in \mathcal{P}_d \setminus \mathcal{Q}} d \cdot a_{\rho,v,m},
\end{equation}
 then \eqref{local_euler_factor_def} together with \eqref{partial_l_defn} implies
\begin{equation}
	T \frac{d}{dT}\log L_{\mathcal{Q}}(T,\rho) = \sum_{n=1}^{\infty}b_{\rho,n} T^n.
\end{equation}
The quantities $b_{\rho,n}$ have arithmetic meaning --- they are the so-called \emph{cohomological traces} of $\rho$, and our definition \eqref{Lefschetz_trace} is usually a result known as the \emph{Grothendieck-Lefschetz trace formula}. Later we will make crucial use of this arithmetic origin. 
\section{The von Mangoldt function $\Lambda_\rho$} \label{sec:defn_of_von_mang}
Let $\mathcal{M} \subset \Fqt$ be the set of monic polynomials, and let $\mathcal{M}_n \subset \mathcal{M}$ be the set of monic polynomials of degree $n$. Further, let $\mathcal{I} \subset \mathcal{M}$ be the set of irreducible monic polynomials. 
Recall we write $v_P$ for the finite place corresponding to $P \in \mathcal{I}$.
\begin{definition} \label{def.von_mangoldt_rho}
	Let $\rho \from G_{K,\mathcal{Q}} \to \mathrm{GL}(V)$ a continuous $\ell$-adic Galois representation, and let $a_{\rho,v,m} = \mathrm{Tr}\left( \rho_v(\Frob_v) \right)$. We define the \emph{von Mangoldt function} $\Lambda_p \from \Fqt \to \overbar{\Q}_\ell$ by
	\begin{equation} \label{eq:von_mang_defn}
		\Lambda_\rho(f) = \begin{cases} d \cdot a_{\rho,v_P,m} &\text{ if } f = c \cdot P^m \text{ for some } c \in \F_q^{\times} \text{ and } P \in \mathcal{I}_d, \\
		0 &\text{ otherwise}.
		\end{cases} 
	\end{equation}
\end{definition}
If we compare this with the usual polynomial von Mangoldt function, recall that the polynomial zeta function is given by
\begin{equation}
	Z(T) = \sum_{f \in \mathcal{M}_n} T^n = \frac{1}{1-qT}.
\end{equation}
Defining local Euler factors $Z(T,P) = (1-T)$, then  just like in \eqref{complete_l_defn} we obtain
\begin{equation}
	Z(T) = \prod_{P \in \mathcal{I}}Z(T^{\deg(P)},P)^{-1}.
\end{equation}
An easy computation shows
\begin{equation}
	T\frac{d}{dT} \log Z(T,P)^{-1} = \sum_{m=1}^\infty T^m,
\end{equation}
and thus
\begin{equation}
	T\frac{d}{dT} \log Z(T^{\deg(P)},P)^{-1} = \sum_{m=1}^\infty \deg(P) T^{\deg(P) m} = \sum_{n=1}^\infty\left( \sum_{m \deg(P) = n} \Lambda(P^m) \right) T^n,
\end{equation}
since $\Lambda(P^m) = \deg(P)$ for all $P \in \mathcal{I}$ and $m \geq 1$. This also shows that for the trivial representation $\rho = \mathbf{1}$ we recover $\Lambda_{\mathbf{1}} = \Lambda$. It is also worth noting that in general, for a representation $\rho$, the von Mangoldt function $\Lambda_\rho$ does \emph{not}  satisfy $\Lambda_\rho(P^m) = \Lambda_{\rho}(P^k)$ for positive integers $m$ and $k$.

A priori $\Lambda_\rho$ takes values in $\overbar{\Q}_\ell$, however we will assume that the range lies within $\overbar{\Q}$, as was done in \cite{HKRG17}. Further, we fix embeddings $\iota \from \overbar{\Q} \to \C$, and $\overbar{\Q} \to \overbar{\Q}_\ell$ and later we will impose appropriate assumptions on $\iota$. We note here that we suppress notation and mostly not make the embedding $\iota$ explicit in the hope of improving readability.

Let $A \in \Fqt$ be a polynomial of degree $n$. 
In analogy with the polynomial case, we define the following quantity
\begin{equation}
	\nu_\rho(A;h) \coloneqq \sum_{\substack{f \in I(A;h) \\ f(0) \neq 0}}\Lambda_\rho(f),
\end{equation}
and also we define the expectation and variance of $\nu_\rho(A;h)$ in the usual way
\begin{align} \label{variance_mangoldt_short_intervals_definition}
	\mathbb{E}_{A,n}[\nu_\rho(A;h)] &\coloneqq \frac{1}{q^n} \sum_{A \in \mathcal{M}_n}\nu_\rho(A;h), \quad \text{ and} \\
	\mathrm{Var}_{A,n}[\nu_\rho(A;h)] &\coloneqq \frac{1}{q^n} \sum_{A \in \mathcal{M}_n} \lvert \nu_\rho(A;h) - \mathbb{E}_{A,n}[\nu_\rho(A;h)] \rvert^2,
\end{align}
respectively. Our aim is to establish an asymptotic for the variance. In order to do this we need to introduce some more ideas. In particular it will be very useful to express the variance as a combination of Dirichlet characters, which naturally have modulus $T^{n-h}$.

\section{Dirichlet characters} 
Let $Q \in \Fqt$ be a non-constant polynomial and write $\Gamma(Q) = \left( \Fqt/Q\Fqt\right)^\times$. 
Recall that $\mathcal{P}$ is the set of places of $K$, and let $\mathcal{Q}$ be the set 
\begin{equation}
	\mathcal{Q} \coloneqq \{ v \in \mathcal{P} \mid v(Q) \neq 1 \}.
\end{equation}
Note that for any irreducible polynomial $P \in \mathcal{I}$ we have
\begin{equation}
	P \vert Q \iff v_P \in \mathcal{Q}.
\end{equation}
Further, since we assumed $Q$ not to be constant, we have that $v_\infty(Q) \neq 1$, so that $v_\infty \in \mathcal{Q}$. Consider now the complement $\mathcal{U_Q} \coloneqq \mathcal{P} \setminus \mathcal{Q}$. Rephrasing the above, we have a bijective correspondence between the elements $u \in \mathcal{U_Q}$ and monic irreducible polynomials $P_u$, which do not divide $Q$. 

Recall that by definition $G_{K,\mathcal{Q}} = \Gal(K_{\mathcal{Q}}/K)$, where $K_\mathcal{Q} \subset K_s$ is the maximal extension subject to all places in $\mathcal{P} \setminus \mathcal{Q} = \mathcal{U_Q}$ being unramified. As was outlined in Section~\ref{sec.preliminaries}, every $u \in \mathcal{U_Q}$ corresponding to $P_u$  therefore determines a conjugacy class $(\Frob_u) \in G_{K,\mathcal{Q}}$. In particular, if we consider the abelianization $G_{K,\mathcal{Q}}^{\mathrm{ab}}$ of $G_{K,\mathcal{Q}}$, then we obtain a well-defined element $\Frob_u \in G_{K,\mathcal{Q}}^{\mathrm{ab}}$, and a surjective homomorphism 
\begin{align}
	\alpha_\mathcal{Q} \from G_{K,\mathcal{Q}}^{\mathrm{ab}} &\twoheadrightarrow \Gamma(Q) \\
	\Frob_u &\mapsto P_u \; \mathrm{mod} \; Q.
\end{align}
Clearly, there is also a canonical map $G_{K,\mathcal{Q}} \twoheadrightarrow G_{K,\mathcal{Q}}^{\mathrm{ab}}$ and so a character $\varphi \in \Phi(Q)$ induces a map
\begin{equation} \label{lift_character}
	G_{K,\mathcal{Q}} \twoheadrightarrow G_{K,\mathcal{Q}}^\mathrm{ab} \overset{\alpha_{\mathcal{Q}}}\twoheadrightarrow \Gamma(Q) \overset{\varphi}\rightarrow \overbar{\Q}^\times.
\end{equation}
Let $\Phi(Q)$ be the set of Dirichlet characters of modulus $Q$. We may view a character $\varphi \in \Phi(Q)$ as a map $\varphi \from \Fqt \to \overbar{\mathbb{Q}}$ in the usual way. This allows us to lift $\varphi$ to a map $G_{K,\mathcal{Q}} \to \overbar{\Q}$ in a similar fashion to above. That is, we define it to be the composite
\begin{equation}
	G_{K,\mathcal{Q}} \twoheadrightarrow G_{K,\mathcal{Q}}^\mathrm{ab} \overset{\alpha_{\mathcal{Q}}}\twoheadrightarrow \Fqt \overset{\varphi}\rightarrow \overbar{\Q}.
\end{equation}
We will abuse notation and denote the induced map $G_{K,\mathcal{Q}} \to \overbar{\Q}$ by $\varphi$.

Recall, a character $\varphi \in \Phi(Q)$ is \emph{even} if $\varphi(a) = 1$ for all $a \in \F_q^\times$ and denote by $\Phi(Q)^{ev} \subset \Phi(Q)$ the subset of even Dirichlet characters of modulus $Q$. Let $c$ be a generator of $\F_q^\times$. An arbitrary Dirichlet character $\varphi \in \Phi(Q)$ can take the value $\varphi(c) = \zeta^i$ for $i = 0, \dots, q-1$, where $\zeta$ is a primitive $(q-1)$-th root of unity, whereas even characters are restricted to satisfy $\varphi(c) = 1$. From this we get the following.
\begin{lemma} \label{lem:even_characters}
Consider $Q \in \Fqt$. Then
\begin{equation}
		\norm{\Phi(Q)^{ev}} = \frac{\norm{\Phi(Q)}}{q-1}.
	\end{equation}	
\end{lemma}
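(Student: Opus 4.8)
The plan is to identify $\Phi(Q)^{ev}$ as the kernel of a restriction homomorphism between character groups, and then to count using the fact that a finite abelian group has the same order as its group of characters.

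First I would note that, since $Q$ is non-constant, every $c \in \F_q^\times$ is coprime to $Q$, so reduction modulo $Q$ yields an injective group homomorphism $\F_q^\times \hookrightarrow \Gamma(Q)$; identify $\F_q^\times$ with its image. A Dirichlet character $\varphi \in \Phi(Q)$ is by definition a homomorphism $\Gamma(Q) \to \overbar{\Q}^\times$, so restriction along this inclusion defines a group homomorphism
\[
	\mathrm{res} \colon \Phi(Q) = \widehat{\Gamma(Q)} \longrightarrow \widehat{\F_q^\times}, \qquad \varphi \longmapsto \varphi\vert_{\F_q^\times}.
\]
By the very definition of an even character, $\ker(\mathrm{res}) = \Phi(Q)^{ev}$. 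Equivalently, and this is the cleanest route, $\Phi(Q)^{ev}$ is canonically the character group of the quotient $\Gamma(Q)/\F_q^\times$.

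Next I would establish that $\mathrm{res}$ is surjective. This is the standard fact that a character of a subgroup of a finite abelian group extends to the whole group (equivalently, $\overbar{\Q}^\times$ is divisible, so $\F_q^\times$ splits off as a direct summand; or one proceeds by induction along a subgroup chain). Concretely this is exactly the remark preceding the lemma: with $c$ a generator of $\F_q^\times$ and $\zeta$ a primitive $(q-1)$-th root of unity, $\varphi\vert_{\F_q^\times}$ is determined by the value $\varphi(c) \in \{1, \zeta, \dots, \zeta^{q-2}\}$, and each of these $q-1$ values is actually taken by some $\varphi \in \Phi(Q)$.

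Finally, the first isomorphism theorem gives $\Phi(Q)/\Phi(Q)^{ev} \cong \widehat{\F_q^\times}$, hence $\norm{\Phi(Q)^{ev}} = \norm{\Phi(Q)}/\norm{\widehat{\F_q^\times}}$; since $\norm{\widehat{\F_q^\times}} = \norm{\F_q^\times} = q-1$, the claimed identity follows. Alternatively, using the quotient description directly, $\norm{\Phi(Q)^{ev}} = \norm{\widehat{\Gamma(Q)/\F_q^\times}} = \norm{\Gamma(Q)/\F_q^\times} = \norm{\Gamma(Q)}/(q-1) = \norm{\Phi(Q)}/(q-1)$. There is no genuine obstacle here; the only point needing a word of justification is the surjectivity of $\mathrm{res}$ (or, in the quotient formulation, that passing to the quotient by $\F_q^\times$ is legitimate, which needs $\F_q^\times \le \Gamma(Q)$ — automatic since $\Gamma(Q)$ is abelian), and that is entirely routine.
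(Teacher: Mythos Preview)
Your proof is correct and is essentially a formalization of the paper's approach: the paper gives no proof beyond the sentence preceding the lemma, which observes that $\varphi(c)$ ranges over the $(q-1)$-th roots of unity while even characters are exactly those with $\varphi(c)=1$. Your restriction-map and kernel/quotient argument makes precise the surjectivity (each root of unity is actually hit) that the paper leaves implicit.
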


\section{Twisted $L$-functions}
\subsection{$L$-functions}

As before, for $\mathcal{S} \subset \mathcal{P}$ finite, let 
\begin{equation}
	\rho \from G_{K,\mathcal{S}} \to \mathrm{GL}(V)
\end{equation}
be a continuous $\ell$-adic Galois representation. Let $\mathcal{Q} \subset \mathcal{P}$ be a finite subset of places. Consider now any group homomorphism
\begin{equation}
	\varphi \from G_{K,\mathcal{Q}} \to \overbar{\Q}_\ell,
\end{equation}
which we will call an \emph{$\ell$-adic character, with conductor supported in $\mathcal{Q}$}. For our purposes $\varphi$ will usually be induced from a Dirichlet character.

Consider $\mathcal{R} = \mathcal{S} \cup \mathcal{Q}$. Recall, that the definition of the fields $K_\mathcal{S}$ and $K_\mathcal{Q}$ was that they are the maximal subextensions of $K_s/K$, unramified away from $\mathcal{S}$ and $\mathcal{Q}$, respectively. Since $\mathcal{R} \supset \mathcal{S,Q}$, fewer  places are subject to being unramified for the extension $K_\mathcal{R}$ compared to $K_\mathcal{S}$ and $K_\mathcal{Q}$. Therefore $K_\mathcal{R} \supset K_\mathcal{S},K_\mathcal{Q}$, whence by Galois theory, there exist canonical, surjective quotient maps
\begin{equation}
	G_{K,\mathcal{R}} \twoheadrightarrow G_{K,\mathcal{Q}}, \quad \text{and} \quad G_{K,\mathcal{R}} \twoheadrightarrow G_{K,\mathcal{S}}.
\end{equation}
Therefore we can canonically define new maps $\rho_\mathcal{R}$ and $\varphi_\mathcal{R}$ as the composites
\begin{equation}
	\rho_\mathcal{R} \from G_{K,\mathcal{R}} \twoheadrightarrow G_{K,\mathcal{S}} \overset{\rho}\rightarrow \mathrm{GL}(V), \quad \text{and} \quad \varphi_\mathcal{R} \from G_{K,\mathcal{R}} \twoheadrightarrow G_{K,\mathcal{Q}} \overset{\varphi}\rightarrow \overbar{\Q}_\ell.
\end{equation}
We define the \textit{tensor product} $\rho \otimes \varphi$ to be the continuous homomorphism, given by 
\begin{align}
	\rho \otimes \varphi \from G_{K,\mathcal{R}} &\to \mathrm{GL}(V) \quad \qquad \\
	g &\mapsto  \rho_\mathcal{R}(g)\varphi_\mathcal{R}(g).
\end{align}
This allows us to define twisted $L$-functions.

\begin{definition} \label{def:twisted_l_functions}
	Let $\rho$ and $\varphi$ be as above. The \emph{Euler factors attached to $\rho$, twisted  by $\varphi$ at a place $v$}  are defined to be
	\begin{equation}
		L(T,(\rho \otimes \varphi)_v) \coloneqq \det(I-T(\rho \otimes \varphi)_v(\Frob_v) \mid V_v).
	\end{equation}
	The \emph{partial}  and \emph{complete}  $L$-functions \emph{attached to}  $\rho$, \emph{twisted by}   $\varphi$ are respectively defined to be
	\begin{equation}
		L_\mathcal{Q}(T,\rho \otimes \varphi) \coloneqq \prod_{v \notin \mathcal{Q}} L(T^{d_v}, (\rho \otimes \varphi)_v)^{-1} \in \overbar{\Q}_\ell \llbracket T \rrbracket ,
	\end{equation}
	and 
	\begin{equation}
		L(T,\rho \otimes \varphi) \coloneqq \prod_{v \in \mathcal{P}} L(T^{d_v}, (\rho \otimes \varphi)_v)^{-1} \in \overbar{\Q}_\ell \llbracket T \rrbracket.
	\end{equation}
\end{definition}

\begin{remark}
Here we mean 
\begin{equation}
	V_v \coloneqq \{ \mathbf{x} \in V \mid (\rho \otimes \varphi) (\sigma)(\mathbf{x}) = \mathbf{x}, \text{ for all } \sigma \in I_v \}.
\end{equation}	
Further, for any $v \notin \mathcal{Q}$, we have $(\rho \otimes \varphi)_v = \rho_v \cdot \varphi$, so that 
\begin{equation} \label{eq:euler_factor_twist_nice}
	L(T,(\rho \otimes \varphi)_v) = L(\varphi(\Frob_v)T,\rho_v).
\end{equation}
\end{remark}

\subsection{Trace formula for twists}
Let $v \notin \mathcal{Q}$. Using the results from Section~\ref{subsec:trace_formulas} and \eqref{eq:euler_factor_twist_nice} we can find expressions for the logarithmic derivatives of the twisted Euler factors. Indeed, we obtain
\begin{equation}
T \frac{d}{dT} \log L(T,(\rho \otimes \varphi)_v	)^{-1} = \sum_{m=1}^\infty \varphi(\Frob_v)^m a_{\rho,v,m}T^m.
\end{equation}
Let $\mathcal{P}_d \subset \mathcal{P}$ the places of degree $d$. If we define  
\begin{equation} \label{eq:lefshetz_grothendieck_twisted}
	b_{\rho \otimes \varphi,n} \coloneqq \sum_{md = n} \sum_{v \in \mathcal{P}_d\setminus \mathcal{Q}} d \cdot \varphi(\Frob_v)^m a_{\rho,v,m},
\end{equation}
then we get
\begin{equation} \label{eq:twisted_L-function_in_terms_of_coh_traces}
	T \frac{d}{dT} \log L_\mathcal{Q}(T,\rho \otimes \varphi	) = \sum_{n=1}^\infty b_{\rho \otimes \varphi,n} T^n.
\end{equation}
Regarding a character $\varphi$ as a map $\varphi \from G_{K,\mathcal{Q}} \to \overbar{\Q}$, as before, note that  for an irreducible polynomial $P \in \Fqt$ by definition we have $\varphi(P) = \varphi(\Frob_{v_P})$. Thus, from \eqref{eq:lefshetz_grothendieck_twisted} and the definition of $\Lambda_\rho$ \eqref{eq:von_mang_defn} we deduce
\begin{equation} \label{eq:cohomological_traces_character_sum}
	b_{\rho \otimes \varphi,n} = \sum_{f \in \mathcal{M}_n} \varphi(f) \Lambda_\rho(f).
\end{equation}

\section{The variance as a sum of characters}
\subsection{Relating  short intervals to arithmetic progressions}
We define sums of $\Lambda_\rho$ in arithmetic progressions by
\begin{equation}
	\Psi_\rho(n;A,Q) \coloneqq \sum_{\substack{f \equiv A \; \mathrm{mod} \; Q \\ f \in \mathcal{M}_n}}\Lambda_\rho(f).
\end{equation}
We also define a related quantity $\widetilde{\Psi}_\rho(n;A,Q)$, where we consider sums of $\Lambda_\rho$ in arithmetic progressions, however, instead of summing over monic polynomials we now also admit non-monic ones. That is, we define
\begin{equation}
	\widetilde{\Psi}_\rho(n;A,Q) \coloneqq \sum_{\substack{f \equiv A \; \mathrm{mod} \; Q \\ \deg(f) = n}}\Lambda_\rho(f).
\end{equation}
Our aim is to relate $\nu_\rho(A;h)$ and $\widetilde{\Psi}_\rho(n;A,Q)$ following the approach by Keating and Rudnick in \cite{keating2014variance}. 
In order to do this, we will first need to define the \emph{involution}  of a polynomial.

\begin{definition}
	Let $f(t) = a_dt^d + \dots + a_0 \in \Fqt$ be a non-zero polynomial of degree $d$. We define the \emph{involution}  $f^*(t)$ of $f(t)$ to be
	\begin{equation}
		f^*(t) \coloneqq t^d f\left(\frac{1}{t}\right) = a_d + a_{d-1}t + \dots + a_0t^d.
	\end{equation}
	Further, we define $0^* = 0$.
\end{definition}
We record the following properties of the involution, which follow directly from the definition:
\begin{itemize}
	\item[(i)] $f^*(0) \neq 0$ and $f(0) \neq 0$ if and only if $\deg f = \deg f^*$,
	\item[(ii)] If $f(0) \neq  0$ then $(f^*)^* = f$, 
	\item[(iii)] $(fg)^* = f^*g^*$, and
	\item[(iv)] $f$ is monic if and only if $f^*(0) = 1$.
\end{itemize}

The proof of the next lemma, follows the proof of \cite{keating2014variance}, where the corresponding result for the case $\rho = \mathbf{1}$ is shown.

\begin{lemma}
	Let $f \in \Fqt$ be a polynomial such that $f(0) \neq 0$. Then
	\begin{equation}
		\Lambda_\rho(f) = \Lambda_\rho(f^*).
	\end{equation}
\end{lemma}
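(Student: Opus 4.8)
The plan is to reduce the statement to the behaviour of the involution on irreducible polynomials, since $\Lambda_\rho$ is supported on prime powers. First I would note that if $f(0) \neq 0$ then $f$ factors as $c \cdot P^m$ with $P \in \mathcal{I}_d$ irreducible and $c \in \F_q^\times$ precisely when $f^*$ factors as $c' \cdot (P^*)^m$ for some $c' \in \F_q^\times$; this uses property (iii) of the involution, $(P^m)^* = (P^*)^m$, together with property (i) to guarantee that $\deg f^* = \deg f$ and hence that no degree is lost. Moreover $P^*$ is, up to a scalar in $\F_q^\times$, again irreducible, because a nontrivial factorisation of $P^*$ would, after applying the involution again (property (ii), valid since $P(0) \neq 0$ as $P$ is irreducible of positive degree), yield a nontrivial factorisation of $P$. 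So the problem comes down to comparing $a_{\rho,v_P,m}$ with $a_{\rho,v_{P^*},m}$, i.e.\ comparing the Frobenius traces on $V_{v_P}$ and $V_{v_{P^*}}$.

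The key step is therefore to identify the place $v_{P^*}$ in terms of $v_P$ via the automorphism of $K = \F_q(t)$ sending $t \mapsto 1/t$. Let $\sigma$ denote this automorphism of $K$; it extends to an automorphism of $K_s$ and hence acts on $G_K$, on places, and on the relevant Galois-theoretic data. Under $\sigma$ the place $v_P$ is carried to $v_{P^*}$ (both correspond to the same closed point of $\mathbb{P}^1$ after the coordinate change, up to the normalisation by a scalar which does not affect the place). Now I would invoke the hypothesis that $\rho$ is one of the 'suitably nice' representations depending on $Q = t^{n-h}$: the modulus $t^{n-h}$ and the ramification locus $\mathcal{Q}$ are stable under $t \mapsto 1/t$ up to the place at infinity, so the sheaf (equivalently, the representation $\rho$) underlying $\Lambda_\rho$ should be assumed invariant under this involution — indeed this is part of what must be built into the precise definition of 'suitably nice'. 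Granting that $\rho \circ \sigma \cong \rho$, the induced representations $\rho_{v_P}$ and $\rho_{v_{P^*}}$ are conjugate, their Frobenii correspond, and hence $a_{\rho,v_P,m} = a_{\rho,v_{P^*},m}$ for all $m \geq 1$. Combining this with the matching of the combinatorial data $c, m, d$ between $f$ and $f^*$ gives $\Lambda_\rho(f) = d \cdot a_{\rho,v_P,m} = d \cdot a_{\rho,v_{P^*},m} = \Lambda_\rho(f^*)$, and the case where $f$ is not a prime power (so both sides are $0$) is immediate from the first paragraph.

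The main obstacle is the middle step: making precise in what sense $\rho$ is invariant under $t \mapsto 1/t$ and verifying that the Frobenius conjugacy classes at $v_P$ and $v_{P^*}$ are genuinely identified by this symmetry, rather than merely the places being swapped. This is exactly the kind of compatibility that the authors presumably fold into the running hypotheses on $\rho$ attached to $Q = t^{n-h}$ (and it mirrors what happens for $\rho = \mathbf{1}$, where $\Lambda_{\mathbf{1}} = \Lambda$ and the statement is Keating--Rudnick's). If instead one wanted a proof that does not pass through $\rho$ being $\sigma$-invariant, one would have to argue directly that the local data defining $\rho$ at a point and at its image under $t \mapsto 1/t$ agree; but given the paper's framing I would lean on the 'suitably nice' hypothesis, note that it is preserved under the involution, and keep the argument at the level of the clean correspondence $v_P \leftrightarrow v_{P^*}$ and the compatibility of Frobenius traces under it.
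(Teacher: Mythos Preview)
Your route is not the paper's. The paper does \emph{not} fold any $t \mapsto 1/t$ invariance into the hypotheses on $\rho$; the ``suitably nice'' conditions made precise later (pointwise $\iota$-purity, and $\Phi(Q)_{\rho \; \mathrm{heavy}}^{ev} \subseteq \{\varphi_{tr}\}$) say nothing of the sort. So the invariance $\rho \circ \sigma \cong \rho$ you lean on is an assumption you have added, not one the authors make, and as written your argument does not prove the lemma under the paper's hypotheses.

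The paper's argument instead asserts something much stronger than you attempt: that for \emph{any} two monic irreducibles $P,P'$ of equal degree $d$, one has $\Lambda_\rho(P^m) = \Lambda_\rho((P')^m)$. The justification given is that $\Fqt/P\Fqt \cong \Fqt/P'\Fqt$, hence $G_v \cong G_w$, and therefore ``$\rho_v \cong \rho_w$,'' so the Frobenius elements are conjugate and their traces agree. You should scrutinise this implication carefully: that $G_v$ and $G_w$ are abstractly isomorphic (both copies of $\hat{\mathbb{Z}}$) does not by itself force the restrictions of $\rho$ to $D_v$ and $D_w$ to be equivalent, since these decomposition groups sit differently inside $G_K$. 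For a nontrivial $\rho$ --- say the Tate module of the Legendre curve in the paper's own Example~\ref{ex.legendre_curve}, or a nontrivial Dirichlet character --- Frobenius traces genuinely vary among places of a fixed degree. So while your proposal imports an unstated hypothesis, the paper's own reasoning at this step is also not airtight for general $\rho$; your instinct that some additional structure on $\rho$ is needed appears well-founded, even if the particular $t \mapsto 1/t$ invariance you reach for is not what the paper invokes.
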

\begin{proof}
	First we want to establish the following fact: Assuming $f(0) \neq 0$, we have that $f$ is irreducible if and only if $f^*$ is irreducible. Indeed, if we can factorise $f = ab$, where $a \in \Fqt$ and $b \in \Fqt$ are polynomials of positive degree, then by multiplicativity of the involution we have $f^* = a^*b^*$. Since $f(0) \neq 0$, we must also have $a(0) \neq 0$ and $b(0) \neq 0$, whence by (i) above, $a^*$, and $b^*$ are also non-constant. Since involution is self-inverse and $f(0) \neq0$, the other direction follows immediately. This establishes the claim.
	
	All that is left to show now, is that, given two irreducible polynomials $P$ and $P'$ such that $\deg P = \deg P'$, then $\Lambda_\rho(P^m) = \Lambda_\rho((P')^m)$, where $m$ is a positive integer. Recall that $P$ and $P'$ induce places $v = v_P$ and $w = v_{P'}$ respectively. By definition, we have
	\begin{align}
		\Lambda_\rho(P^m) &= (\deg P) \cdot \mathrm{Tr}(\rho_v(\Frob_v)^m \mid V_v),  \text{ and }  \\
		\Lambda_\rho((P')^m) &= (\deg P') \cdot \mathrm{Tr}(\rho_w(\Frob_w)^m \mid V_w).
	\end{align}
Therefore it suffices to show
\begin{equation} \label{eq:unimportant_1}
	\mathrm{Tr}(\rho_v(\Frob_v)^m \mid V_v) = \mathrm{Tr}(\rho_w(\Frob_w)^m \mid V_w),
\end{equation}
for all positive integers $m$. Since there exists only one finite field of a certain order up to isomorphism, we know 
\begin{equation}
	\Fqt/P \Fqt \cong \Fqt/P' \Fqt,
\end{equation}
and therefore in particular also $G_v \cong G_w$, and $\rho_v \cong \rho_w$. Thus $\rho_v(\Frob_v)$ and $\rho_w(\Frob_w)$ lie in the same conjugacy class. Clearly this implies \eqref{eq:unimportant_1}, and hence the lemma follows.
\end{proof}

The next lemma follows the proof given in \cite[Lemma 4.2]{keating2014variance}.

\begin{lemma}
	Let $B \in \Fqt$ be a polynomial of degree $\deg B = n-h-1$. Then 
	\begin{equation} \label{eq:fundamental_relation}
		\nu_\rho(n;t^{h+1}B,h) = \widetilde{\Psi}_\rho(n;B^*,t^{n-h}).
	\end{equation}
\end{lemma}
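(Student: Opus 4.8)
The plan is to realise both sides as sums of $\Lambda_\rho$ over finite sets of polynomials of degree $n$ that are interchanged by the involution $f\mapsto f^*$, to check that $f\mapsto f^*$ restricts to a bijection between these two index sets, and then to invoke the preceding lemma ($\Lambda_\rho(f)=\Lambda_\rho(f^*)$ whenever $f(0)\neq 0$) to deduce that the two sums agree term by term; this follows the argument of \cite[Lemma 4.2]{keating2014variance} for $\rho=\mathbf 1$. First I would describe the left-hand index set explicitly: $f\in I(t^{h+1}B;h)$ precisely when $f=t^{h+1}B+g$ with $\deg g\le h$. Since $\deg(t^{h+1}B)=(h+1)+(n-h-1)=n>h$, every such $f$ has $\deg f=n$, with leading coefficient that of $B$ (hence nonzero), and $f(0)=g(0)$; so the side condition $f(0)\neq 0$ in the definition of $\nu_\rho$ amounts to $g(0)\neq 0$. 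Writing $g=\sum_{i=0}^{h}c_it^i$ and applying the involution to $f$ (which has degree $n$), I would compute
\[
 f^*(t)=t^{n}f(1/t)=t^{\,n-h-1}B(1/t)+t^{\,n-h}\bigl(c_0t^{h}+c_1t^{h-1}+\dots+c_h\bigr)=B^*(t)+t^{\,n-h}r(t),
\]
where $r(t)=c_0t^{h}+\dots+c_h=t^{h}g(1/t)$ has degree at most $h$, with coefficient of $t^{h}$ equal to $c_0=g(0)$. Because $\deg B^*\le n-h-1<n-h$, this shows $f^*\equiv B^*\bmod t^{n-h}$, while $\deg f^*=n$ exactly when $g(0)\neq 0$; hence $f\mapsto f^*$ maps the left-hand index set into the index set of $\widetilde\Psi_\rho(n;B^*,t^{n-h})$.

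Next I would produce the inverse. Given $F$ with $\deg F=n$ and $F\equiv B^*\bmod t^{n-h}$, write $F=B^*+t^{n-h}H$; since $\deg B^*<n$ this forces $\deg H=h$, and $F(0)=B^*(0)$ equals the leading coefficient of $B$, which is nonzero because $\deg B=n-h-1$. An analogous computation gives $F^*=t^{h+1}B+s$ with $s(t)=t^{h}H(1/t)$ of degree at most $h$ and with $s(0)$ the leading coefficient of $H$, hence nonzero; thus $F^*\in I(t^{h+1}B;h)$ and $F^*(0)\neq 0$. Using properties (i)--(iv) of the involution — in particular $(f^*)^*=f$ since $f(0)\neq 0$ and $(F^*)^*=F$ since $F(0)\neq 0$ — one checks that $f\mapsto f^*$ and $F\mapsto F^*$ are mutually inverse, so $f\mapsto f^*$ is a bijection between the two index sets.

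It then remains to assemble the identity. Since every $f$ appearing in the left-hand sum satisfies $f(0)\neq 0$, the preceding lemma gives $\Lambda_\rho(f)=\Lambda_\rho(f^*)$, whence
\[
 \nu_\rho(n;t^{h+1}B,h)=\!\!\sum_{\substack{f\in I(t^{h+1}B;h)\\ f(0)\neq 0}}\!\!\Lambda_\rho(f)=\!\!\sum_{\substack{f\in I(t^{h+1}B;h)\\ f(0)\neq 0}}\!\!\Lambda_\rho(f^*)=\!\!\sum_{\substack{\deg F=n\\ F\equiv B^*\bmod t^{n-h}}}\!\!\Lambda_\rho(F)=\widetilde\Psi_\rho(n;B^*,t^{n-h}).
\]

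I expect the main difficulty to be bookkeeping rather than anything conceptual: $f\mapsto f^*$ is not an involution on all of $\Fqt$ and depends on the degree chosen, so one must track carefully that (a) every $f$ in the left sum has $\deg f=n$ and $f(0)\neq 0$, (b) every $F$ in the right sum automatically has $F(0)\neq 0$, thanks to $\deg B=n-h-1$, and (c) it is precisely this last point that forces the right-hand object to be $\widetilde\Psi_\rho$, which ranges over all degree-$n$ polynomials and not merely the monic ones. Everything else is a routine comparison of coefficients.
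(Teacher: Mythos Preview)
Your proof is correct and follows essentially the same route as the paper: both arguments identify the index set of $\nu_\rho$ with that of $\widetilde\Psi_\rho$ via the involution $f\mapsto f^*$ (using the coefficient comparison $a_n=b_{n-h-1},\dots,a_{h+1}=b_0$, equivalently $f^*\equiv B^*\bmod t^{n-h}$) and then apply the preceding lemma $\Lambda_\rho(f)=\Lambda_\rho(f^*)$. Your write-up is somewhat more careful than the paper's in that you explicitly construct the inverse map and verify $F(0)\neq 0$ on the right-hand side, but this is a difference of exposition rather than of method.
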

\begin{proof}
	Write $B(t) = b_{n-h-1}t^{n-h-1} + \dots + b_0$, and let $f(t) = a_dt^d + \dots + a_0$ be a polynomial such that $f(0) \neq 0$. By definition of  intervals, we have $f \in I(t^{h+1}B,h)$ if and only if $\deg(f(t)-t^{h+1}B(t)) \leq h$. Comparing coefficients gives that we must have $\deg(f) = n$, and
	\begin{equation}
		a_n = b_{n-h-1}, \dots, a_{n-h-1} = b_0.
	\end{equation}
	It is easy to see, that this is the same as requiring 
	\begin{equation}
		f^* \equiv B^* \; \mathrm{mod} \; t^{n-h}.
	\end{equation}
	Since $f(0) \neq 0$ it follows that $\deg(f^*)=n$, and so
	\begin{equation}
		\sum_{\substack{f \in I(t^{h+1}B,h) \\ f(0) \neq 0}}\Lambda_\rho(f) = \sum_{\substack{f^* \equiv B^* \; \mathrm{mod} \; t^{n-h} \\ \deg(f^*) = n}}\Lambda_\rho(f)
	\end{equation}
	Finally, as $\Lambda_\rho(f) = \Lambda_\rho(f^*)$ by the previous lemma, the result follows.
\end{proof}

\subsection{Mean and variance}
Denote by $\mathcal{P}_{\leq h}$ the set of polynomials of degree $\leq h$. Note that every $A \in \mathcal{M}_n$ can uniquely be written as 
\begin{equation}
	A = t^{h+1}B + g,
\end{equation}
where $B \in \mathcal{M}_{n-h-1}$ and $g \in \mathcal{P}_{\leq h}$. Since $\deg(g) \leq h$ we have $A \in I(t^{h+1}B,h)$. By the uniqueness of this decomposition, we may write the monic polynomials of degree $n$ as a disjoint union of intervals $I(t^{h+1}B,h)$, where $B$ runs through polynomials in $\mathcal{M}_{n-h-1}$. That is, we obtain
\begin{equation} \label{eq:disjoint_union}
	\mathcal{M}_n = \coprod_{B \in \mathcal{M}_{n-h-1}}I(t^{h+1}B,h)
\end{equation}
Note that if  $A, A' \in I(t^{h+1}B,h)$ then $I(A,h) = I(A',h) = I(t^{h+1}B,h)$ and so $\nu_\rho(A;h) = \nu_\rho(A';h)$. By using \eqref{eq:disjoint_union} and \eqref{eq:fundamental_relation} we get for the expectation
\begin{align}
	\mathbb{E}_{A,n}[\nu_\rho(A;h)] &= \frac{1}{q^n} \sum_{B \in \mathcal{M}_{n-h-1}} \sum_{A \in I(t^{h+1}B;h)} \nu_\rho(A;h) \\
	&= \frac{1}{\norm{\mathcal{M}_{n-h-1}}} \sum_{B \in \mathcal{M}_{n-h-1}}\nu_\rho(t^{h+1}B;h) \label{eq:nu_rho_mean_first_expression} \\
	&= \frac{1}{q^{n-h-1}} \sum_{\substack{B^* \; \mathrm{mod} \; t^{n-h} \\ B^*(0) = 1}} \widetilde{\Psi}_\rho(n;B^*,t^{n-h}),
\end{align}
and similarly for the variance we obtain 
\begin{align}
	\mathrm{Var}_{A,n}[\nu_\rho(A;h)]
	&= \frac{1}{q^{n-h-1}} \sum_{\substack{B^* \; \mathrm{mod} \; t^{n-h} \\ B^*(0) = 1}} \norm{\widetilde{\Psi}_\rho(n;B^*,t^{n-h}) - \mathbb{E}_{A,n}[\nu_\rho(A;h)]}^2. \label{eq:var_in_terms_of_arith}
\end{align}

\begin{lemma} \label{lem:expectation_short_intervals}
	Let $\rho \from G_{K,\mathcal{S}} \to \mathrm{GL}(V)$ be an $\ell$-adic Galois representation, then for polynomials $A \in \Fqt$ with $\deg A = n > h$ as above we have
	\begin{equation}
		\mathbb{E}_{A,n}[\nu_\rho(A;h)] = \frac{1}{q^{n-h-1}}\left( \sum_{f \in \mathcal{M}_n} \Lambda_\rho(f) - \Lambda_\rho(t^n) \right).
	\end{equation}
\end{lemma}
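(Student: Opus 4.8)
The plan is to start from the expression for the expectation already derived in \eqref{eq:nu_rho_mean_first_expression}, namely
\begin{equation*}
	\mathbb{E}_{A,n}[\nu_\rho(A;h)] = \frac{1}{q^{n-h-1}} \sum_{B \in \mathcal{M}_{n-h-1}} \nu_\rho(t^{h+1}B;h),
\end{equation*}
and to evaluate the inner sum by unwinding the definition of $\nu_\rho$ and invoking the disjoint union \eqref{eq:disjoint_union}. Writing $\nu_\rho(t^{h+1}B;h) = \sum_{f \in I(t^{h+1}B;h),\, f(0)\neq 0} \Lambda_\rho(f)$ and summing over $B \in \mathcal{M}_{n-h-1}$, the fact that the intervals $I(t^{h+1}B;h)$ partition $\mathcal{M}_n$ gives
\begin{equation*}
	\sum_{B \in \mathcal{M}_{n-h-1}} \nu_\rho(t^{h+1}B;h) = \sum_{\substack{f \in \mathcal{M}_n \\ f(0) \neq 0}} \Lambda_\rho(f),
\end{equation*}
so that $\mathbb{E}_{A,n}[\nu_\rho(A;h)] = q^{-(n-h-1)} \sum_{f \in \mathcal{M}_n,\, f(0)\neq 0} \Lambda_\rho(f)$.

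It then remains to compare this restricted sum with the full sum over $\mathcal{M}_n$, i.e.\ to show
\begin{equation*}
	\sum_{\substack{f \in \mathcal{M}_n \\ f(0) = 0}} \Lambda_\rho(f) = \Lambda_\rho(t^n).
\end{equation*}
For this I would argue directly from Definition~\ref{def.von_mangoldt_rho}: if $\Lambda_\rho(f) \neq 0$ and $f$ is monic, then $f = c \cdot P^m$ with $P$ monic irreducible and $c \in \F_q^\times$, and monicity of $P^m$ forces $c = 1$, so $f = P^m$. The condition $f(0) = 0$ then forces $P(0) = 0$, and the only monic irreducible polynomial vanishing at $0$ is $P = t$; hence $f = t^m$, and $\deg f = n$ gives $f = t^n$. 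Therefore $t^n$ is the only term contributing to the left-hand side, which establishes the displayed identity and completes the proof.

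There is no real obstacle here; the only point requiring a little care is the last step, where one must correctly read off from the definition of $\Lambda_\rho$ that among monic degree-$n$ polynomials with vanishing constant term, only $t^n$ has nonzero von Mangoldt value — in particular, that $t$ is the unique monic irreducible with $P(0)=0$, and that monicity of $f$ pins down the scalar $c$ to be $1$.
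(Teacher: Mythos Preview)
Your proposal is correct and follows essentially the same argument as the paper: start from \eqref{eq:nu_rho_mean_first_expression}, use the partition \eqref{eq:disjoint_union} to collapse the double sum to $\sum_{f\in\mathcal{M}_n,\,f(0)\neq 0}\Lambda_\rho(f)$, and then observe that among monic degree-$n$ polynomials with $f(0)=0$ only $t^n$ is a prime power, so the missing piece is exactly $\Lambda_\rho(t^n)$. Your justification of this last step is in fact slightly more detailed than the paper's.
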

\begin{proof}
	Using the expression for the expectation that we obtained in \eqref{eq:nu_rho_mean_first_expression}, we compute
	\begin{equation}\label{eq:lemma_expression_1}
		\mathbb{E}_{A,n}[\nu_\rho(A;h)] = \frac{1}{q^{n-h-1}}  \sum_{B \in \mathcal{M}_{n-h-1}} \sum_{\substack{f \in I(t^{h+1}B,h) \\ f(0) \neq 0}} \Lambda_\rho(f).
	\end{equation}
	By definition of the interval around a polynomial, we can decompose $\mathcal{M}_n$ into intervals as in \eqref{eq:disjoint_union}, and so we get 
	\begin{equation} \label{eq:lemma_expression_3}
	 \coprod_{B \in \mathcal{M}_{n-h-1}}	\{ f \in I(t^{h+1}B,h) \colon f(0) \neq 0 \} = \{ f \in  \mathcal{M}_n \colon f(0) \neq 0 \}.
\end{equation}
The only monic polynomial $f \in \mathcal{M}_n$ with $f(0) = 0$, which is a prime power is given by $f(t) = t^n$. Since $\Lambda_\rho$ vanishes away from prime powers we get
\begin{equation} \label{eq:lemma_expression_2}
	\sum_{\substack{f \in \mathcal{M}_n \\ f(0) = 0}} \Lambda_\rho(f) = \Lambda_\rho(t^n).
\end{equation}
Combining \eqref{eq:lemma_expression_1}, \eqref{eq:lemma_expression_3} and \eqref{eq:lemma_expression_2} we therefore conclude the result.
\end{proof}
The next result will give us an expression of the variance of $\nu_\rho$ in terms of the cohomological traces. Before we state and prove the result, let us briefly recall the orthogonality relations for Dirichlet characters . For $f,g \in \Gamma(Q)$ we have
\begin{equation} \label{eq:char_orth_1}
	\frac{1}{\norm{\Phi(Q)}} \sum_{\varphi \in \Phi(Q)} \varphi(f) \overbar{\varphi(g)} = \begin{cases} 1 \quad &\text{if } f \equiv g \mod Q, \\
 	0 \quad &\text{otherwise},
 \end{cases}
\end{equation} 
and for $\varphi_1, \varphi_2 \in \Phi(Q)$ we get
\begin{equation} \label{eq:char_orth_2}
	\frac{1}{\norm{\Gamma(Q)}} \sum_{f \in \Gamma(Q)} \varphi_1(f) \overbar{\varphi_2(f)} = \begin{cases} 1 \quad &\text{if } \varphi_1 = \varphi_2, \\
		0 \quad &\text{otherwise}.
	\end{cases}
\end{equation}
From \eqref{eq:char_orth_2} it is not hard to deduce that when $Q = t^m$, for even characters $\varphi_1$ and $\varphi_2$, we have
\begin{equation} \label{eq:char_orth_3_even}
	\frac{1}{\norm{\Phi(Q)^{ev}}} \sum_{\substack{f \in  \Gamma(Q) \\ f(0) = 1}} \varphi_1(f) \overbar{\varphi_2(f)} = \begin{cases} 1 \quad &\text{if } \varphi_1 = \varphi_2, \\
		0 \quad &\text{otherwise}.
	\end{cases}
\end{equation}
For reference, a proof is given in \cite[Lemma 3.2]{keating2014variance}. In order to simplify notation slightly, we write $\Phi^*(Q)^{ev} = \Phi(Q)^{ev} \setminus \{ \varphi_{tr} \}$, where $\varphi_{tr}$ denotes the trivial Dirichlet character of modulus $Q$.
\begin{proposition}
	Let $\rho \from G_{K,\mathcal{S}} \to \mathrm{GL}(V)$ be a continuous Galois representation, let $Q = t^{n-h}$ and let $A \in \Fqt$ be polynomials such that $\deg A=n >h$. Then
	\begin{equation}
		\mathbb{E}_{A,n}[\nu_\rho(A;h)] = \frac{b_{\rho \otimes \varphi_{tr},n}}{q^{n-h-1}} \quad \text{and} \quad \mathrm{Var}_{A,n}[\nu_\rho(A;h)] = \frac{1}{q^{2(n-h-1)}} \sum_{\varphi \in \Phi^*(Q)^{ev}} \norm{b_{\rho \otimes \varphi,n}}^2.
	\end{equation}
	
\end{proposition}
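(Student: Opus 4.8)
The plan is to start from the two displayed expressions \eqref{eq:nu_rho_mean_first_expression} and \eqref{eq:var_in_terms_of_arith}, which already rewrite the mean and variance of $\nu_\rho(A;h)$ in terms of sums over $B^* \bmod t^{n-h}$ with $B^*(0)=1$ of the arithmetic-progression sums $\widetilde{\Psi}_\rho(n;B^*,t^{n-h})$. The key point is that the condition $B^*(0) = 1$ together with the modulus $t^{n-h}$ means exactly that we are summing over monic residue classes, and by orthogonality the relevant characters are the \emph{even} ones. So the first step is to record, for $Q = t^{n-h}$ and any $A$ coprime to $Q$,
\begin{equation*}
\widetilde{\Psi}_\rho(n;A,Q) = \frac{1}{|\Phi(Q)|}\sum_{\varphi \in \Phi(Q)}\overbar{\varphi(A)}\sum_{\substack{\deg f = n}}\varphi(f)\Lambda_\rho(f),
\end{equation*}
and to observe that, since $\Lambda_\rho$ is supported on $c\cdot P^m$ with $c \in \F_q^\times$ and because $\varphi(c)\Lambda_\rho(cP^m) = \varphi(c)\Lambda_\rho(P^m)$, the inner sum over all $f$ of degree $n$ (monic and non-monic) equals $\big(\sum_{a\in\F_q^\times}\varphi(a)\big)\, b_{\rho\otimes\varphi,n}$ by \eqref{eq:cohomological_traces_character_sum}. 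The Gauss-sum-type factor $\sum_{a\in\F_q^\times}\varphi(a)$ is $q-1$ if $\varphi$ is even and $0$ otherwise, which is precisely the mechanism that collapses the sum onto $\Phi(Q)^{ev}$.

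The second step is the mean. Plugging the above into \eqref{eq:nu_rho_mean_first_expression}–\eqref{eq:var_in_terms_of_arith}, or more directly combining Lemma~\ref{lem:expectation_short_intervals} with \eqref{eq:cohomological_traces_character_sum} applied to $\varphi = \varphi_{tr}$: we have $b_{\rho\otimes\varphi_{tr},n} = \sum_{f\in\mathcal{M}_n}\varphi_{tr}(f)\Lambda_\rho(f) = \sum_{f\in\mathcal{M}_n}\Lambda_\rho(f) - \Lambda_\rho(t^n)$, since $\varphi_{tr}$ (the trivial character mod $t^{n-h}$) vanishes exactly on multiples of $t$, and the only prime power in $\mathcal{M}_n$ divisible by $t$ is $t^n$. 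Hence Lemma~\ref{lem:expectation_short_intervals} gives $\mathbb{E}_{A,n}[\nu_\rho(A;h)] = b_{\rho\otimes\varphi_{tr},n}/q^{n-h-1}$ immediately.

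The third step is the variance. Substitute the character expansion of $\widetilde{\Psi}_\rho$ (now over even characters, with the $q-1$ factor and $|\Phi(Q)^{ev}| = |\Phi(Q)|/(q-1)$ from Lemma~\ref{lem:even_characters}) into \eqref{eq:var_in_terms_of_arith}, noting that the $\varphi = \varphi_{tr}$ term reproduces exactly $\mathbb{E}_{A,n}[\nu_\rho(A;h)]$ and therefore cancels against the subtracted mean, leaving a sum over $\Phi^*(Q)^{ev}$. Then expand the square $|\cdot|^2$ and apply the orthogonality relation \eqref{eq:char_orth_3_even} to the sum over $B^* \bmod t^{n-h}$ with $B^*(0)=1$: this picks out the diagonal $\varphi_1 = \varphi_2$ and contributes a factor $|\Phi(Q)^{ev}|$, which together with the prefactors $1/q^{n-h-1}$, the $(q-1)^2$ from two copies of the Gauss factor, the $1/|\Phi(Q)|^2$, and $|\Phi(Q)^{ev}| = |\Phi(Q)|/(q-1)$ must be bookkept carefully to collapse to $q^{-2(n-h-1)}$. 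I expect the main obstacle to be precisely this bookkeeping of normalization constants — making sure the factors of $q-1$, $|\Phi(Q)|$, $|\Phi(Q)^{ev}|$, and $q^{n-h-1}$ combine correctly — together with the slightly delicate point that $\widetilde{\Psi}_\rho$ sums over \emph{all} polynomials of degree $n$ (which is why the $\F_q^\times$-average of $\varphi$ appears and forces evenness), rather than just the monic ones. Everything else is a routine application of character orthogonality.
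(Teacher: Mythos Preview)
Your proposal is correct and follows essentially the same route as the paper: character-expand $\widetilde{\Psi}_\rho(n;B^*,Q)$ via \eqref{eq:char_orth_1}, use the $\F_q^\times$-average to kill non-even characters and produce the factor $q-1$, identify the $\varphi_{tr}$ term with the mean (via Lemma~\ref{lem:expectation_short_intervals} and \eqref{eq:cohomological_traces_character_sum}), and then apply \eqref{eq:char_orth_3_even} to the $B^*$-sum in \eqref{eq:var_in_terms_of_arith}. The bookkeeping you flag as the main obstacle is exactly right and collapses as you describe, since $(q-1)/|\Phi(Q)| = 1/|\Phi(Q)^{ev}| = 1/q^{n-h-1}$.
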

\begin{proof}
	Given $f \in \Fqt$ we have for the trivial character of modulus $Q = t^{n-h}$
	\begin{equation}
		\varphi_{tr}(f) = \begin{cases} 1 \quad &\text{if } t \nmid f, \\ 0 \quad &\text{if } t \mid f. \end{cases}
	\end{equation}
	Thus by definition of the cohomological traces \eqref{eq:cohomological_traces_character_sum} we get
	\begin{equation}
		b_{\rho \otimes \varphi_{tr},n} = \sum_{f \in \mathcal{M}_n} \Lambda_\rho(f) \varphi_{tr}(f) = \sum_{\substack{f \in \mathcal{M}_n \\ f(0) \neq 0}} \Lambda_\rho(f).
	\end{equation}
	Comparing this with Lemma \ref{lem:expectation_short_intervals} gives the first part of the proposition.
	
For the second part, note that using the character orthogonality relation \eqref{eq:char_orth_1} we can rewrite $\tilde{\Psi}(n;B^*,Q)$ as
\begin{equation}
	\tilde{\Psi}(n;B^*,Q) = \frac{1}{\norm{\Phi(Q)}} \sum_{\varphi \in \Phi(Q)} \overbar{\varphi(B^*)} \sum_{\deg(f) = n} \Lambda_\rho(f) \varphi(f).
\end{equation}
	The term $\sum_{\deg(f) = n} \Lambda_\rho(f) \varphi(f)$ is non-zero only if $\varphi$ is an even character, and every even, non-trivial character contributes
	\begin{equation}
		\overbar{\varphi(B^*)} \frac{q-1}{\norm{\Phi(Q)}} \sum_{f \in \mathcal{M}_n} \Lambda_\rho(f) \varphi(f) = \frac{1}{q^{n-h-1}}\overbar{\varphi(B^*)} b_{\rho \otimes \varphi,n},
	\end{equation}
	where we used Lemma \ref{lem:even_characters} for the last equality. In the beginning of this proof we showed that the contribution of the trivial character is precisely $\mathbb{E}_{A,n}[\nu_\rho(A;h)]$, and therefore we obtain
	\begin{equation}
		\tilde{\Psi}(n;B^*,Q) - \mathbb{E}_{A,n}[\nu_\rho(A;h)] = \frac{1}{q^{n-h-1}}\sum_{\varphi \in \Phi^*(Q)^{ev}} \overbar{\varphi(B^*)} b_{\rho \otimes \varphi,n}.  
	\end{equation}
Comparing this with \eqref{eq:var_in_terms_of_arith} we get
\begin{equation}
	\mathrm{Var}_{A,n}\left[\nu_\rho(A;h)\right] = \frac{1}{q^{n-h-1}} \sum_{\substack{B^* \; \mathrm{mod} \; Q \\ B^*(0) = 1}} \frac{1}{q^{2(n-h-1)}} \norm{\sum_{\varphi \in \Phi^*(Q)^{ev}} \overbar{\varphi(B^*)} b_{\rho \otimes \varphi,n}}^2.
\end{equation} 
Exchanging the order of summation and using the character orthogonality relation \eqref{eq:char_orth_3_even} we therefore end up with
\begin{equation} \label{eq:variance_in_terms_b_rho_phi}
	\mathrm{Var}_{A,n}[\nu_\rho(A;h)]    = \frac{1}{q^{2(n-h-1)}} \sum_{\varphi \in \Phi^*(Q)^{ev}} \norm{b_{\rho \otimes \varphi,n}}^2,  
\end{equation}
which concludes the proof.
\end{proof}

\section{Weights, purity and characters} \label{section.weights_purity}
\subsection{Weights and purity of representations}
Recall that we fixed embeddings $\overbar{\Q} \hookrightarrow \overbar{\Q}_\ell$, and $\iota \from \overbar{\Q} \hookrightarrow \C$, even though we suppressed notation. 
The following definition, which we adopt from \cite{HKRG17}, will allow us to assume a Riemann hypothesis in a precise way.
\begin{definition}
	For a polynomial in $\overbar{\Q}_\ell[T]$, we say that it is \emph{$\iota$-pure of $q$-weight $w$}  if it is non-zero, all of its zeroes $\alpha$ lie in $\overbar{\Q}$ and they satisfy
	\begin{equation}
	\norm{\iota(\alpha)}^2 = \frac{1}{q^w}.	
	\end{equation}
	Further, we say that a polynomial is \emph{$\iota$-mixed of $q$-weights $\leq w$}  if it is a product of polynomials, each of which is $\iota$-pure of $q$-weight $\leq w$.
	
	For an $\ell$-adic Galois representation $\rho \from G_{K,\mathcal{S}} \to \mathrm{GL}(V)$ if there exists a finite subset of places $\mathcal{S} \subset \mathcal{P}$ such that $\rho$ is unramified away from $\mathcal{S}$, we say that $\rho$ is \emph{pointwise $\iota$-pure of $q$-weight $w$} if for each $v \notin \mathcal{S}$, the Euler factor $L(T^{d_v},\rho_v)$ is $\iota$-pure of $q$-weight $w$.
\end{definition}

We would like to have a Riemann hypothesis satisfied not just for the $L$-function of $\rho$, but in particular for all twists $\rho \otimes \varphi$. The following result, which is shown in \cite{HKRG17}, ensures that we do not need too many assumptions.
\begin{lemma}
	let $\rho$ be a Galois representation as before, and let $\varphi \in \Phi(Q)$ be a Dirichlet character. If $\rho$ is pointwise $\iota$-pure of $q$-weight $w$, then so is $\rho \otimes \varphi$.
\end{lemma}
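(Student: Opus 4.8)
The plan is to reduce the statement to the clean twisting formula \eqref{eq:euler_factor_twist_nice}. Recall that $\rho$ is unramified outside a finite set $\mathcal{S}$ and that $\varphi$ has conductor supported in $\mathcal{Q}$, so that $\rho \otimes \varphi$ is unramified outside $\mathcal{R} = \mathcal{S} \cup \mathcal{Q}$; hence, to prove that $\rho \otimes \varphi$ is pointwise $\iota$-pure of $q$-weight $w$, it suffices to check that the Euler factor $L(T^{d_v}, (\rho \otimes \varphi)_v)$ is $\iota$-pure of $q$-weight $w$ for every $v \notin \mathcal{R}$. For such a $v$ we have both $v \notin \mathcal{S}$ and $v \notin \mathcal{Q}$, so $I_v$ acts trivially, $V_v = V$, and \eqref{eq:euler_factor_twist_nice} gives $L(T, (\rho \otimes \varphi)_v) = L(\varphi(\Frob_v) T, \rho_v)$, hence $L(T^{d_v}, (\rho \otimes \varphi)_v) = L(\varphi(\Frob_v) T^{d_v}, \rho_v)$.

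The one fact I would record first is that $\varphi(\Frob_v)$ is a root of unity for every $v \notin \mathcal{Q}$: the character $\varphi$ factors through the finite group $\Gamma(Q) = (\Fqt/Q\Fqt)^\times$, so its image is a finite subgroup of $\overbar{\Q}^\times$. In particular $\varphi(\Frob_v) \in \overbar{\Q}^\times$ with $\norm{\iota(\varphi(\Frob_v))} = 1$, and any $d_v$-th root $\mu$ of $\varphi(\Frob_v)$ is again a root of unity, so $\mu \in \overbar{\Q}^\times$ and $\norm{\iota(\mu)} = 1$.

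To conclude, set $P(T) = L(T^{d_v}, \rho_v)$ and fix a $d_v$-th root $\mu$ of $\varphi(\Frob_v)$; then $L(T^{d_v}, (\rho \otimes \varphi)_v) = L(\varphi(\Frob_v) T^{d_v}, \rho_v) = P(\mu T)$, whose zeroes are exactly the elements $\beta/\mu$ with $\beta$ a zero of $P$. Since $\rho$ is pointwise $\iota$-pure of $q$-weight $w$, every such $\beta$ lies in $\overbar{\Q}$ and satisfies $\norm{\iota(\beta)}^2 = q^{-w}$; as $\mu \in \overbar{\Q}^\times$ and $\norm{\iota(\mu)} = 1$, the same holds for $\beta/\mu$, and $P(\mu T)$ is visibly non-zero (constant term $1$). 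Hence $L(T^{d_v}, (\rho \otimes \varphi)_v)$ is $\iota$-pure of $q$-weight $w$, and since $v \notin \mathcal{R}$ was arbitrary, the lemma follows. The only step needing any care --- the ``main obstacle'', such as it is --- is the bookkeeping with the ramification sets: one must make sure that the places over which purity of the twist has to be checked are exactly those where $\rho$ is unramified \emph{and} the clean identity \eqref{eq:euler_factor_twist_nice} applies, i.e.\ $v \notin \mathcal{S} \cup \mathcal{Q}$. After that the argument is a single substitution plus the remark that Dirichlet-character values are roots of unity.
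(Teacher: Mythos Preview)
Your proof is correct and follows essentially the same route as the paper's: both use the twisting identity \eqref{eq:euler_factor_twist_nice}, observe that $\varphi(\Frob_v)$ is a root of unity because $\Gamma(Q)$ is finite, and conclude that the zeroes of the twisted Euler factor differ from those of $L(T,\rho_v)$ by an algebraic number of absolute value $1$. Your version is slightly more explicit about the ramification set $\mathcal{R}=\mathcal{S}\cup\mathcal{Q}$ and the passage to $L(T^{d_v},\cdot)$ via a $d_v$-th root $\mu$, but the underlying argument is the same.
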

\begin{proof}
	As before let $\mathcal{Q}$ be the set of places, which divide $Q$. We wish to show that for $v \notin \mathcal{Q}$, the Euler factors $L(T^{d_v},(\rho \otimes \varphi)_v)$ are $\iota$-pure of $q$-weight $w$. For this note that since $\Gamma(Q)$ has finite order, we must have that $\varphi_\mathcal{Q}(\Frob_{v_P}) = \varphi(P) = \zeta$ is a root of unity, whence also $\zeta \in \overbar{\Q}$. Comparing this with \eqref{eq:euler_factor_twist_nice}, we see immediately that $\alpha$ is a zero of $L(T,(\rho \otimes \varphi)_v)$  if and only if $\alpha/\zeta$ is a zero of $L(T,\rho_v)$. The result follows.
\end{proof}
Deligne famously proved the Riemann hypothesis for varieties over finite fields in the most general fashion in 1980. 
Theorem 1 and Theorem 2 of \cite{deligne1980conjecture} imply the following result.
\begin{theorem}[Deligne]
	Let $\rho$ be a Galois representation as above, and let $\varphi \in \Phi(Q)$ be a Dirichlet character. Assume $\rho \otimes \varphi$ is pointwise $\iota$-pure of weight $w$. Then
	\begin{itemize}
		\item $L_\mathcal{Q}(T,\rho \otimes \varphi)$ is a ratio of polynomials, which are $\iota$-mixed of $q$-weights $\leq w+2$, and
		\item $L(T,\rho \otimes \varphi)$ is a ratio of polynomials in $\overbar{\Q}[T]$, which are  $\iota$-pure of $q$-weight $w+1$.
	\end{itemize}
\end{theorem}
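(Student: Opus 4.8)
The plan is to reinterpret the statement geometrically and then quote Weil~II. First I would pass from Galois representations to $\ell$-adic sheaves. Setting $\mathcal{R} = \mathcal{S}\cup\mathcal{Q}$, the continuous representation $\rho\otimes\varphi \from G_{K,\mathcal{R}} \to \mathrm{GL}(V)$ corresponds, under the equivalence between continuous $\ell$-adic representations of the arithmetic fundamental group and lisse $\overbar{\Q}_\ell$-sheaves, to a lisse sheaf $\mathcal{F}$ on the open curve $U = \mathbb{P}^1_{\F_q}\setminus\mathcal{R}$. A place $v\notin\mathcal{R}$ is a closed point of $U$ of degree $d_v$, the stalk $\mathcal{F}_{\bar v}$ is $V$ with $\Frob_v$ acting through $(\rho\otimes\varphi)_v$, and pointwise $\iota$-purity of $q$-weight $w$ for $\rho\otimes\varphi$ is, by definition, pointwise $\iota$-purity of weight $w$ for $\mathcal{F}$ in Deligne's sense. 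By the Grothendieck--Lefschetz trace formula (compatibly with the expansion of the logarithmic derivative in Section~\ref{subsec:trace_formulas}) the partial $L$-function is the alternating product
\[
	L_{\mathcal{Q}}(T,\rho\otimes\varphi) = \prod_{i}\det\!\left(1 - T\cdot\Frob_q \mid H^i_c(U_{\overline{\F}_q},\mathcal{F})\right)^{(-1)^{i+1}},
\]
and since $U$ is a smooth affine curve only $i=1,2$ contribute, with $H^0_c(U_{\overline{\F}_q},\mathcal{F}) = 0$ because $U$ is connected and non-proper.

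Next I would invoke Theorem~1 of \cite{deligne1980conjecture}: if $\mathcal{F}$ is $\iota$-mixed of weights $\leq w$ then $H^i_c(U_{\overline{\F}_q},\mathcal{F})$ is $\iota$-mixed of weights $\leq w+i$. Consequently the numerator of $L_{\mathcal{Q}}(T,\rho\otimes\varphi)$, built from $H^1_c$, is $\iota$-mixed of weights $\leq w+1$, and the denominator, built from $H^2_c\cong(\mathcal{F}_{\pi_1})(-1)$, is $\iota$-pure of weight $w+2$; in particular both numerator and denominator are $\iota$-mixed of $q$-weights $\leq w+2$, which is the first assertion. Finite-dimensionality of the cohomology and algebraicity of the Frobenius eigenvalues are standard, and rationality of $L_{\mathcal{Q}}$ was already recorded above following \cite[3.4]{HKRG17}.

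For the complete $L$-function I would pass to the smooth compactification. Let $j\from U\hookrightarrow\mathbb{P}^1_{\F_q}$ be the inclusion and let $\mathcal{G} = j_{!*}\mathcal{F}$ be the middle extension; its stalk at $v\in\mathcal{R}$ is the inertia invariants $V_v$, so the Euler product $\prod_v\det(1 - T^{d_v}\Frob_v\mid\mathcal{G}_{\bar v})^{-1}$ equals $L(T,\rho\otimes\varphi)$ --- this is the middle-extension construction underlying \cite[3.4]{HKRG17}. Grothendieck--Lefschetz on the proper curve $\mathbb{P}^1$ gives $L(T,\rho\otimes\varphi) = \prod_i\det(1 - T\Frob_q\mid H^i(\mathbb{P}^1_{\overline{\F}_q},\mathcal{G}))^{(-1)^{i+1}}$. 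The decisive input, Theorem~2 of \cite{deligne1980conjecture} (the purity half of Weil~II), is that the middle extension of a pointwise $\iota$-pure sheaf of weight $w$ on a curve is pure of weight $w$; together with the purity of cohomology of pure sheaves on proper varieties this forces $H^i(\mathbb{P}^1_{\overline{\F}_q},\mathcal{G})$ to be $\iota$-pure of weight $w+i$. The middle term $H^1$ then yields a factor that is $\iota$-pure of weight $w+1$, while the outer terms $H^0 = \mathcal{G}^{\pi_1^{\mathrm{geom}}}$ and $H^2 = (\mathcal{G}_{\pi_1^{\mathrm{geom}}})(-1)$ either vanish --- which is the case as soon as $\rho\otimes\varphi$ has no trivial geometric constituent, a condition ensured by the ``suitably nice'' hypotheses on $\rho$ --- or are themselves $\iota$-pure; so in all cases $L(T,\rho\otimes\varphi)$ is a ratio of polynomials in $\overbar{\Q}[T]$ that are $\iota$-pure of $q$-weight $w+1$, which is the second assertion.

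I expect the main obstacle to lie in the passage to the middle extension: one must verify that $\mathcal{G}=j_{!*}\mathcal{F}$ really has inertia-invariant stalks at the ramified places, so that its Euler product reproduces $L(T,\rho\otimes\varphi)$ on the nose, and that middle extension preserves pointwise purity --- both are standard consequences of Weil~II and the formalism of perverse sheaves, but they are the substantive geometric content. Once the sheaf-theoretic dictionary is in place, the weight bookkeeping ($H^i_c$ of weights $\leq w+i$ on the open curve, $H^i$ pure of weight $w+i$ on the proper one) is immediate from Deligne's two theorems. A secondary issue is the control of $H^0$ and $H^2$ of the middle extension, which is precisely where the geometric irreducibility and non-triviality assumptions packaged into ``suitably nice'' are needed.
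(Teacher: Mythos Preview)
The paper does not supply a proof: it simply records the statement and attributes it to Th\'eor\`eme~1 and Th\'eor\`eme~2 of \cite{deligne1980conjecture}. Your sketch is precisely the standard derivation those citations encode --- pass to the lisse sheaf, apply Grothendieck--Lefschetz, then invoke Deligne's weight bounds on $H^i_c$ for the open curve and purity of the middle extension on the proper one --- so there is nothing to compare against beyond noting that you have unpacked what the paper leaves as a black box.

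One small point to tighten: with $U=\mathbb{P}^1\setminus\mathcal{R}$ and $\mathcal{R}=\mathcal{S}\cup\mathcal{Q}$, the trace formula on $U$ produces $L_{\mathcal{R}}(T,\rho\otimes\varphi)$, not $L_{\mathcal{Q}}$. To recover $L_{\mathcal{Q}}$ you must either assume $\mathcal{S}\subseteq\mathcal{Q}$ (which is the regime the paper ultimately works in) or push $\mathcal{F}$ forward as a constructible or middle-extension sheaf to $\mathbb{P}^1\setminus\mathcal{Q}$ and pick up the inertia-invariant stalks at the points of $\mathcal{S}\setminus\mathcal{Q}$; the weight estimate is unaffected either way. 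Your treatment of $H^0$ and $H^2$ for the complete $L$-function is in fact more careful than the paper's bare phrasing ``$\iota$-pure of $q$-weight $w+1$'', which tacitly presumes those groups vanish; you correctly flag this as needing the absence of a geometrically trivial constituent.
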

\subsection{Good, mixed and heavy characters}
From now on we will always assume that $\rho$ is pointwise $\iota$-pure of $q$-weight $w$ and $Q = t^{n-h}$, where $n-h \geq 5$. We would like to consider characters $\varphi$ for which $L_\mathcal{Q}(T,\rho \otimes \varphi)$ is a pure polynomial, so that we obtain a good estimate of $b_{\rho \otimes \varphi,n}$. 

Similarly to Hall, Keating and Roditty-Gershon in \cite{HKRG17} we will distinguish between certain families of characters.
\begin{definition}
	Let $\rho$ be pointwise $\iota$-pure of $q$-weight $w$. Let $Q \in \Fqt$ be a polynomial. For an even character $\varphi \in \Phi(Q)$ we say that $\varphi$ is \emph{good}  if 
	\begin{equation}
		M(T,\rho \otimes \varphi) \coloneqq L_\mathcal{Q}(T,\rho \otimes \varphi)/(1-T)
	\end{equation} 
	 is a polynomial that is $\iota$-pure of $q$-weight $w+1$. We denote the degree of $M$ by $s_\mathcal{Q}(\rho)$ or if the context is clear, just by $S$.
	 If an even character is not good, we call it \emph{bad}. We call a bad character  \emph{heavy}  if $L(T,\rho \otimes \varphi)$ is not a polynomial, and we call a bad character \emph{mixed}  if it is not heavy.
	 The sets of good, heavy and mixed even characters are respectively denoted by
	 \begin{equation}
	 	\Phi(Q)_{\rho \; \mathrm{good}}^{ev}, \quad \Phi(Q)_{\rho \; \mathrm{heavy}}^{ev} \quad \text{and} \quad \Phi(Q)_{\rho \; \mathrm{mixed}}^{ev}.
	 \end{equation}
\end{definition}
\begin{remark}
	 A priori the degree $S$ is not well-defined since it might vary across different Dirichlet characters $\varphi$. Fortunately, for our setting \cite[Theorem 1.2]{sawin2018equidistribution} provides that there is a single $S$ for $q^{n-h-1}(1-O(1/q))$ even characters. In fact, a precise value is given in \cite[Lemma 4.10]{sawin2018equidistribution} provided certain conditions are satisfied.
\end{remark}
\subsection{Cohomological interpretation}
Note that our Galois representation $\rho$ is finitely ramified and thus it can be regarded as a representation of the \'etale fundamental group of some open subset of $\mathbb{A}^1_{\overbar{\mathbb{F}}_{q}}$. This defines a middle extension sheaf $\mathrm{ME}(\rho)$ on $\mathbb{A}^1_{\overbar{\mathbb{F}}_{q}}$ that is lisse on a dense open subset. Assuming that $V$ is irreducible implies that  $\mathrm{ME}(\rho)$ is irreducible too. For a more detailed discussion of  $\mathrm{ME}(\rho)$ we refer the reader to \cite{HKRG17}.

For a good or mixed even Dirichlet character $\varphi \in \Phi(Q)$ write $\mathcal{L}_\varphi$ for the unique rank one lisse sheaf whose monodromy representation is $\varphi$. Further write $C_{\varphi}$ for the scaled conjugacy class of $\Frob_q/q^{\frac{w+1}{2}}$ on $H_c^1(\mathbb{A}^1_{\overbar{\mathbb{F}}_{q}}, \mathrm{ME}(\rho) \otimes \mathcal{L}_\varphi)$, where $H_c^i(\mathbb{A}^1_{\overbar{\mathbb{F}}_{q}},  \mathrm{ME}(\rho) \otimes \mathcal{L}_\varphi)$ is the $i$-th \'etale cohomology group of the sheaf $\mathrm{ME}(\rho) \otimes \mathcal{L}_\varphi$.

Characteristic polynomials remain unchanged after semisimplification and therefore $\det(1-C_\varphi q^{\frac{w+1}{2}} T ) = \det (1-T \; \Frob_q, H_c^1(\mathbb{A}^1_{\overbar{\mathbb{F}}_{q}}, \mathrm{ME}(\rho) \otimes \mathcal{L}_\varphi))$. 
Recalling the general formula for an Artin $L$-function of a Galois representation in terms of the Frobenius action on the first cohomology of the associated middle extension sheaf we find
\begin{equation} \label{eq:twisted_l_cohomological}
	L_{\mathcal{Q}}(T, \rho \otimes \varphi)/(1-T) = \det \left( 1-T\; \Frob_q, H_c^1(\mathbb{A}^1_{\overbar{\mathbb{F}}_{q}}, \mathrm{ME}(\rho) \otimes \mathcal{L}_\varphi) \right).
\end{equation}
The factor of $1/(1-T)$ accounts for the factor at the place at zero since we consider the completed $L$-function.

\subsection{Good characters dominate}
For a character $\varphi \in \Phi(Q)^{ev}$ we say that the space $H_c^1(\mathbb{A}^1_{\overbar{\mathbb{F}}_{q}}, \mathrm{ME}(\rho) \otimes \mathcal{L}_\varphi)$ is $\iota$-pure of $q$-weight $w$ if  $\det \left( 1-T\; \Frob_q, H_c^1(\mathbb{A}^1_{\overbar{\mathbb{F}}_{q}}, \mathrm{ME}(\rho) \otimes \mathcal{L}_\varphi) \right)$ is $\iota$-pure of $q$-weight $w$. We restate \cite[Lemma 2.7]{sawin2018equidistribution}.
\begin{lemma} 
	Let $\rho$ be pointwise $\iota$-pure with $q$-weight $w$. Then the space $H_c^1(\mathbb{A}^1_{\overbar{\mathbb{F}}_{q}}, \mathrm{ME}(\rho) \otimes \mathcal{L}_\varphi)$ is $\iota$-mixed of $q$-weight $\leq w+1$, and in particular it is $\iota$-pure of $q$-weight $w+1$ for all but $\dim V$ characters in $\Phi(Q)$.
\end{lemma}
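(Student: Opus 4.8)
The plan is to deduce both assertions from Deligne's Weil~II, following Sawin's argument. Write $\mathcal{G} := \mathrm{ME}(\rho)\otimes\mathcal{L}_\varphi$, let $j\from U\hookrightarrow \mathbb{A}^1_{\overbar{\mathbb{F}}_q}$ be the inclusion of the dense open set on which $\mathcal{G}$ is lisse, and put $\mathcal{F}:=\mathcal{G}\vert_U$. Since $\mathcal{L}_\varphi$ has finite, hence unitary, monodromy it is punctually $\iota$-pure of $q$-weight $0$, so $\mathcal{F}$ is punctually $\iota$-pure of $q$-weight $w$. The first assertion is then immediate from Deligne's fundamental estimate \cite{deligne1980conjecture}: if a sheaf on a curve is $\iota$-mixed of $q$-weights $\leq w$, then $H^i_c$ is $\iota$-mixed of $q$-weights $\leq i+w$, and we apply this with $i=1$.

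For the refined statement I would compare $H^1_c(\mathbb{A}^1,\mathcal{G})$ with intersection cohomology. The sheaf $\mathcal{L}_\varphi$ is lisse on $\mathbb{A}^1$ — in the incarnation relevant here (after the involution $f\mapsto f^*$ built into the reduction to progressions modulo $t^{n-h}$) all of its ramification sits at the boundary point $\infty$ — so tensoring by it preserves the property of being a middle extension on $\mathbb{A}^1$; thus $\mathcal{G}=j_*\mathcal{F}$, and writing $k\from\mathbb{A}^1\hookrightarrow\mathbb{P}^1$ its middle extension to $\mathbb{P}^1$ is $\mathcal{G}^\sharp:=k_*\mathcal{G}$. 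The exact sequence of sheaves $0\to k_!\mathcal{G}\to\mathcal{G}^\sharp\to i_{\infty,*}\big(\mathcal{F}^{I_\infty}\big)\to 0$, together with $H^0_c(\mathbb{A}^1,\mathcal{G})=0$ (as $\mathcal{G}$ has no punctual sections), yields
\begin{equation*}
0\longrightarrow H^0(\mathbb{A}^1,\mathcal{G})\longrightarrow \mathcal{F}^{I_\infty}\longrightarrow H^1_c(\mathbb{A}^1,\mathcal{G})\longrightarrow H^1(\mathbb{P}^1,\mathcal{G}^\sharp)\longrightarrow 0 .
\end{equation*}
By Weil~II the term $H^1(\mathbb{P}^1,\mathcal{G}^\sharp)$ is $\iota$-pure of $q$-weight $w+1$ (purity of the intersection cohomology of a pure middle extension), while $\mathcal{F}^{I_\infty}$, being the inertia invariants of a weight-$w$ representation, is $\iota$-mixed of $q$-weights $\leq w$. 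Hence $H^1_c(\mathbb{A}^1,\mathcal{G})$ is $\iota$-pure of $q$-weight $w+1$ whenever $H^0(\mathbb{A}^1,\mathcal{G})\to\mathcal{F}^{I_\infty}$ is surjective, in particular whenever $\mathcal{F}^{I_\infty}=0$.

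It then remains to bound the number of $\varphi$ for which this fails. Using additivity in $\rho$ — $H^1_c$, $\mathcal{F}^{I_\infty}$ and the cohomology above all split over a direct sum decomposition of $V$, purity of a sum is equivalent to purity of each summand, and the dimensions add — we may assume $V$, hence $\mathrm{ME}(\rho)$, is geometrically irreducible. Then $H^0(\mathbb{A}^1,\mathcal{G})=0$ unless $\mathcal{G}$ is geometrically trivial, in which case $H^1_c(\mathbb{A}^1,\mathcal{G})=0$ and purity holds vacuously. For the remaining $\varphi$, failure of purity forces $\mathcal{F}^{I_\infty}\neq 0$, i.e.\ the local monodromy representation of $\mathrm{ME}(\rho)$ at $\infty$ — which has dimension at most $\dim V$, hence at most $\dim V$ Jordan--H\"older constituents — must contain $\varphi\vert_{I_\infty}^{-1}$ as a one-dimensional constituent. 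Since distinct Dirichlet characters of modulus $Q=t^{n-h}$ induce distinct characters of $I_\infty$, at most $\dim V$ characters $\varphi\in\Phi(Q)$ can be bad, which is the claim.

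The step I expect to be the main obstacle is this last one: passing from a crude ``all but finitely many $\varphi$'' to the sharp bound $\dim V$ requires controlling the number of Jordan--H\"older constituents of the local monodromy of $\mathrm{ME}(\rho)$ at $\infty$ and the injectivity of $\varphi\mapsto\varphi\vert_{I_\infty}$ on characters of modulus $t^{n-h}$; this is precisely the local analysis carried out in \cite[Lemma~2.7]{sawin2018equidistribution}, whose proof we are reproducing. The purity inputs — purity of intersection cohomology and of inertia invariants — are classical consequences of Weil~II and require no semisimplicity hypothesis on $\rho$ beyond what has already been assumed.
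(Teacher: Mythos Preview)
The paper does not give its own proof of this lemma: it is stated verbatim as a restatement of \cite[Lemma~2.7]{sawin2018equidistribution}, with no argument supplied. Your proposal is therefore not competing with any proof in the paper; it is an attempt to reconstruct Sawin's proof, as you yourself say in the last paragraph. In that sense the approaches agree by construction.

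Your sketch is broadly correct and follows the standard Weil~II route (Deligne's weight bound for $H^1_c$; comparison with $H^1$ of the middle extension on $\mathbb{P}^1$ via the excision triangle; purity of the latter; the low-weight tail coming from $\mathcal{F}^{I_\infty}$; counting the bad $\varphi$ by the Jordan--H\"older constituents of the local monodromy). Two small points are worth tightening. First, the claim that $\mathcal{L}_\varphi$ is lisse on all of $\mathbb{A}^1$ with ramification concentrated at $\infty$ is Sawin's convention, not literally the paper's: here $\varphi$ is a Dirichlet character modulo $Q=t^{n-h}$, so naively its sheaf is ramified at $0$. Your parenthetical about the involution $f\mapsto f^*$ is exactly what bridges the two pictures, but you should make explicit that you are transporting the whole setup through that involution (so that ``$\infty$'' in your exact sequence really is the point carrying the wild ramification of the character) before invoking injectivity of $\varphi\mapsto\varphi|_{I_\infty}$. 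Second, the ``additivity in $\rho$'' reduction to geometrically irreducible $V$ tacitly assumes geometric semisimplicity, which is not among the hypotheses here; it is cleaner to skip that reduction and argue directly that any failure of purity forces $\varphi^{-1}|_{I_\infty}$ to occur as a subrepresentation of the local monodromy of $\mathrm{ME}(\rho)$ at the wildly ramified point, and then bound the number of such $\varphi$ by $\dim V$ as you do.
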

Using \eqref{eq:twisted_l_cohomological} we therefore deduce the following.
\begin{lemma} \label{lem.mixed_chars_finite}
	Let $\varphi \in \Phi(Q)_{\rho \; \mathrm{mixed}}^{ev}$. Then $L_{\mathcal{Q}}(T, \rho \otimes \varphi)/(1-T)$ is mixed of $q$-weights $\leq w+1$. Further the number of mixed characters is finite, in particular $\norm{\Phi(Q)_{\rho \; \mathrm{mixed}}^{ev}} = O(1)$ as $q \rightarrow \infty$.
\end{lemma}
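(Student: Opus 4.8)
The plan is to read off both assertions directly from the cohomological identity \eqref{eq:twisted_l_cohomological} together with the preceding lemma (the restatement of \cite[Lemma 2.7]{sawin2018equidistribution}); there is very little to do beyond unwinding the definitions of \emph{good}, \emph{mixed} and \emph{heavy}.

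First I would handle the purity statement. Since $\varphi \in \Phi(Q)_{\rho\;\mathrm{mixed}}^{ev}$ is in particular a good or mixed even character, the sheaf $\mathcal{L}_\varphi$ is available and the identity \eqref{eq:twisted_l_cohomological} applies, so
\begin{equation*}
	L_{\mathcal{Q}}(T, \rho \otimes \varphi)/(1-T) = \det\!\left( 1 - T\,\Frob_q,\, H_c^1(\mathbb{A}^1_{\overbar{\mathbb{F}}_{q}}, \mathrm{ME}(\rho) \otimes \mathcal{L}_\varphi) \right).
\end{equation*}
By the preceding lemma the space $H_c^1(\mathbb{A}^1_{\overbar{\mathbb{F}}_{q}}, \mathrm{ME}(\rho) \otimes \mathcal{L}_\varphi)$ is $\iota$-mixed of $q$-weight $\leq w+1$, which by our conventions means precisely that its characteristic polynomial of $\Frob_q$ — namely $L_{\mathcal{Q}}(T, \rho \otimes \varphi)/(1-T)$ — is $\iota$-mixed of $q$-weights $\leq w+1$. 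This is the first claim.

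Next I would bound $\norm{\Phi(Q)_{\rho\;\mathrm{mixed}}^{ev}}$. By definition a mixed character is bad and not heavy; being not heavy says $L(T, \rho \otimes \varphi)$ is a polynomial, and being bad says $M(T, \rho \otimes \varphi) = L_{\mathcal{Q}}(T, \rho \otimes \varphi)/(1-T)$ is \emph{not} a polynomial that is $\iota$-pure of $q$-weight $w+1$. But \eqref{eq:twisted_l_cohomological} already exhibits $M(T, \rho \otimes \varphi)$ as the characteristic polynomial of $\Frob_q$ on a cohomology group, hence as an honest polynomial; so for a mixed $\varphi$ the failure recorded by ``bad'' can only be that $M(T, \rho \otimes \varphi)$ is not $\iota$-pure of $q$-weight $w+1$, i.e.\ $H_c^1(\mathbb{A}^1_{\overbar{\mathbb{F}}_{q}}, \mathrm{ME}(\rho) \otimes \mathcal{L}_\varphi)$ is not $\iota$-pure of $q$-weight $w+1$. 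By the preceding lemma this occurs for at most $\dim V$ characters in $\Phi(Q)$, hence for at most $\dim V$ even characters, so $\norm{\Phi(Q)_{\rho\;\mathrm{mixed}}^{ev}} \leq \dim V$. As $\rho$, and therefore $\dim V$, is fixed independently of $q$, this is $O(1)$ as $q \to \infty$.

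The only point that genuinely needs care is the implication ``$\varphi$ mixed $\Rightarrow$ $H_c^1$ not $\iota$-pure of weight $w+1$'': one must make sure that \eqref{eq:twisted_l_cohomological} really does apply to mixed characters (so that the $1/(1-T)$ bookkeeping at the place $t=0$ and the omission of the Euler factors at the places in $\mathcal{Q}$ are accounted for correctly, and $M$ is a polynomial), and that ``bad'' is used strictly in the sense of the definition of \emph{good}. Once this is pinned down, the finiteness is an immediate consequence of \cite[Lemma 2.7]{sawin2018equidistribution}, so I do not expect any substantial obstacle.
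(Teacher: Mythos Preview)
Your argument is correct and is exactly the approach the paper takes: the paper's proof is the single sentence ``Using \eqref{eq:twisted_l_cohomological} we therefore deduce the following'', and you have simply spelled out what that deduction is, namely reading off both assertions from the cohomological identity together with the restated \cite[Lemma 2.7]{sawin2018equidistribution}. The care you take in noting that \eqref{eq:twisted_l_cohomological} applies to mixed (not only good) characters, so that $M(T,\rho\otimes\varphi)$ is already a polynomial and hence ``bad'' forces failure of purity, is precisely the content the paper leaves implicit.
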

Therefore we are now only left with dealing with heavy characters. In the final theorem we will assume that the only possible bad character is $\varphi_{tr}$. The following shows that this is usually not a very strong condition.
\begin{lemma}[Corollary 8.3.3 in \cite{HKRG17}]
	Assume that $\rho$ is pointwise $\iota$-pure with $q$-weight $w$. Assume further that $\rho$ is irreducible and geometrically semisimple. Write $m = \dim V$. Then $\Phi(Q)_{\rho \; \mathrm{heavy}}^{ev} \subseteq \{ \varphi_{tr} \}$ if and only if one of the following hold
	\begin{itemize}
		\item[(i)] $m > 1$.
		\item[(ii)] $m = 1$ and $\rho$ is geometrically isomorphic to the trivial representation.
		\item[(iii)] $m=1$ and $\rho$ is not geometrically isomorphic to a Dirichlet character in $\Phi(Q)$.
	\end{itemize}
	In particular equality holds if and only if (ii) holds.
\end{lemma}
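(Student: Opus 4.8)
The plan is to convert the condition defining a heavy character — that the completed $L$-function $L(T,\rho\otimes\varphi)$ fails to be a polynomial — into a purely geometric statement about $\mathrm{ME}(\rho)$, and then to run a short case analysis on $m=\dim V$.

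\emph{Step 1 (cohomological translation).} The completed Artin $L$-function of $\rho\otimes\varphi$ is the $L$-function of the middle extension of $\mathrm{ME}(\rho)\otimes\mathcal{L}_\varphi$ to the complete curve $\mathbb{P}^1_{\overline{\mathbb{F}}_q}$; writing $\mathcal{G}$ for this sheaf, the Grothendieck--Lefschetz formula gives
\begin{equation*}
 L(T,\rho\otimes\varphi)=\frac{\det\!\left(1-T\,\Frob_q\mid H^1(\mathbb{P}^1_{\overline{\mathbb{F}}_q},\mathcal{G})\right)}{\det\!\left(1-T\,\Frob_q\mid H^0(\mathbb{P}^1_{\overline{\mathbb{F}}_q},\mathcal{G})\right)\det\!\left(1-T\,\Frob_q\mid H^2(\mathbb{P}^1_{\overline{\mathbb{F}}_q},\mathcal{G})\right)}.
\end{equation*}
For a middle extension sheaf on $\mathbb{P}^1$, $H^0$ is the space of invariants and $H^2$ (by Poincar\'e duality) the Tate-twisted space of coinvariants of the geometric monodromy group acting on the generic stalk. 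Using that $\rho$ is pointwise $\iota$-pure, the factors coming from $H^0$ and $H^2$ cannot be cancelled against the pure-of-weight-$(w+1)$ numerator $H^1$; hence $L(T,\rho\otimes\varphi)$ is \emph{not} a polynomial if and only if the geometric monodromy action on the generic stalk of $\mathrm{ME}(\rho)\otimes\mathcal{L}_\varphi$ has a nonzero (co)invariant. Since $\rho$ is geometrically semisimple and $\mathcal{L}_\varphi$ has rank one, this action is semisimple, so this happens if and only if the trivial representation is a geometric summand, i.e.\ if and only if $\bar\varphi$ occurs as a (rank-one) geometric constituent of $\mathrm{ME}(\rho)$. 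Combined with the remaining clause in the definition of ``heavy'' (that $\varphi$ be bad, which is harmless here since the characters produced below are easily checked to be bad), this is the working criterion: membership in $\Phi(Q)_{\rho\;\mathrm{heavy}}^{ev}$ amounts to $\bar\varphi$ being a rank-one geometric constituent of $\mathrm{ME}(\rho)$.

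\emph{Step 2 (the case $m>1$).} Here I would show $\Phi(Q)_{\rho\;\mathrm{heavy}}^{ev}=\emptyset$. Since $\rho$ is arithmetically irreducible, its geometric Jordan--H\"older constituents form a single $\Frob_q$-orbit and all have the same rank; if one were of rank one then all would be, and --- because $\bar\varphi$ is $\Frob_q$-stable (being defined over $\mathbb{F}_q$) --- the orbit and the multiplicity would collapse, writing $\mathrm{ME}(\rho)\cong\bar\varphi\otimes W$ with $W$ a representation of the abelian pro-group generated by $\Frob_q$, forcing $\dim W=1$ and $m=1$, a contradiction. In particular $\varphi_{tr}$ is not heavy, so the inclusion $\Phi(Q)_{\rho\;\mathrm{heavy}}^{ev}\subseteq\{\varphi_{tr}\}$ holds (strictly), which is $(\mathrm{i})$.

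\emph{Step 3 (the case $m=1$).} Now $\mathrm{ME}(\rho)$ is itself of rank one, and by Step 1, $\varphi$ is heavy exactly when $\bar\varphi$ is geometrically isomorphic to $\rho$. The key input is a rigidity statement: two Dirichlet characters of modulus $t^{n-h}$ that are geometrically isomorphic coincide. Indeed their ratio is geometrically trivial, hence unramified everywhere, hence of the shape $\Frob_{v_P}\mapsto\zeta^{\deg P}$ for a fixed root of unity $\zeta$; for this to factor through $\Gamma(t^{n-h})$ one needs $\zeta=1$, since any residue class coprime to $t^{n-h}$ contains monic irreducibles of unboundedly many degrees. Consequently there is at most one Dirichlet character of modulus $t^{n-h}$ geometrically isomorphic to $\rho$, whence at most one even heavy character. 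Under $(\mathrm{ii})$ ($\rho$ geometrically trivial) this character is $\varphi_{tr}$, which is even and heavy, so $\Phi(Q)_{\rho\;\mathrm{heavy}}^{ev}=\{\varphi_{tr}\}$. Under $(\mathrm{iii})$ ($\rho$ not geometrically isomorphic to any Dirichlet character of modulus $t^{n-h}$) there is no heavy even character at all --- in particular $\varphi_{tr}$ is not heavy --- so $\Phi(Q)_{\rho\;\mathrm{heavy}}^{ev}=\emptyset\subseteq\{\varphi_{tr}\}$. For the converse, if $(\mathrm{i})$, $(\mathrm{ii})$, $(\mathrm{iii})$ all fail then $m=1$, $\rho$ is not geometrically trivial, and $\rho$ is geometrically isomorphic to some Dirichlet character $\psi$ of modulus $t^{n-h}$; using the hypotheses on the local behaviour of $\rho$ at $\infty$ that are part of being ``suitably nice'', $\psi$ can be taken \emph{even}, and then $\bar\psi$ is an even, nontrivial, heavy character, so the inclusion fails. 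Finally ``equality holds iff $(\mathrm{ii})$'' follows at once: equality means $\varphi_{tr}$ is heavy, i.e.\ $\mathrm{ME}(\rho)$ has a geometrically trivial constituent, which by arithmetic irreducibility forces $m=1$ and $\rho$ geometrically trivial.

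\emph{Main obstacle.} The delicate point is Step 1: establishing precisely that a genuine pole of the \emph{completed} $L$-function is governed exactly by the geometric $H^0$ and $H^2$ of the middle extension on $\mathbb{P}^1$ --- i.e.\ controlling the possible cancellations between the Euler factors at the ramified places (and at $0$, $\infty$) and the rest --- and, intertwined with this, verifying in Step 3 that the Dirichlet character geometrically matching $\rho$ can be chosen even, which is exactly where the precise form of the ``suitably nice'' hypotheses, in particular the ramification of $\rho$ at $\infty$, enters.
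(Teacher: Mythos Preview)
The paper does not prove this lemma at all: it is quoted verbatim as Corollary 8.3.3 of \cite{HKRG17} and used as a black box. So there is no ``paper's own proof'' to compare against; your sketch is an independent argument.

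Your overall strategy is the correct and standard one. Step~1 (translating ``$L(T,\rho\otimes\varphi)$ not a polynomial'' into ``$\bar\varphi$ is a geometric constituent of $\rho$'' via $H^0,H^2$ of the middle extension on $\mathbb{P}^1$) is exactly the mechanism underlying the HKRG17 result, and your use of purity to rule out cancellation with $H^1$ is the right idea. Step~2 is fine: arithmetic irreducibility forces the geometric isotypic pieces into a single Frobenius orbit, and a Frobenius-stable rank-one piece then forces $m=1$. In Step~3 the rigidity statement (two Dirichlet characters mod $t^{n-h}$ that are geometrically isomorphic coincide) is correct, and your proof via Dirichlet's theorem for $\mathbb{F}_q[t]$ works.

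The one genuine issue is the point you yourself flag as the ``main obstacle'', and it is not cosmetic. In the converse direction you must produce an \emph{even} non-trivial heavy character. Your rigidity statement shows there is a \emph{unique} $\psi\in\Phi(Q)$ with $\rho\cong_{\mathrm{geom}}\psi$, so $\bar\psi$ is the unique candidate; but nothing in the stated hypotheses (pointwise $\iota$-pure, irreducible, geometrically semisimple) forces $\bar\psi$ to be even. If $\bar\psi$ happens to be odd, then $\Phi(Q)^{ev}_{\rho\;\mathrm{heavy}}=\emptyset\subseteq\{\varphi_{tr}\}$ while (i),(ii),(iii) all fail, contradicting the ``only if''. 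You patch this by invoking unstated ``suitably nice'' hypotheses at $\infty$, which is not legitimate for the lemma as written. The likely explanation is that the paper has transcribed the HKRG17 statement (which in the arithmetic-progressions context concerns \emph{all} characters, not just even ones) and appended the superscript $ev$ without adjusting condition (iii); the clean fix is either to drop $ev$ throughout or to replace (iii) by ``$\rho$ is not geometrically isomorphic to an \emph{even} Dirichlet character in $\Phi(Q)$''. Either way, your argument then goes through without the ad hoc appeal to extra hypotheses.
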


Assume that $\varphi \in \Phi(Q)$ is  good character. Then we can write 
\begin{equation}
	L_\mathcal{Q}(T,\rho \otimes \varphi) = (1-T) \prod_{i=1}^S (1-\alpha_i T),
\end{equation}
where $\norm{\alpha_i} = q^{(1+w)/2}$. Therefore, there exists a conjugacy class of unitary matrices, that depends only on $\rho$ and $\varphi$, of which we will denote a representative by $\theta_{\rho,\varphi} \in \U(S)$, such that
\begin{equation}
	L_{\mathcal{Q}}(T,\rho \otimes \varphi) = (1-T) \det(I - q^{\frac{1+w}{2}} \theta_{\rho,\varphi}T).
\end{equation}
Recall from \eqref{eq:twisted_L-function_in_terms_of_coh_traces} that we have
\begin{equation}
	T \frac{d}{dT} \log L_\mathcal{Q}(T,\rho \otimes \varphi	) = \sum_{n=1}^\infty b_{\rho \otimes \varphi,n} T^n,
\end{equation}
so that for a good character $\varphi$ we obtain
\begin{equation}
	b_{\rho \otimes \varphi,n} = - q^{\frac{n(1+w)}{2}}\mathrm{Tr}(\theta_{\rho,\varphi}^n) - 1.
\end{equation}
Thus, using \eqref{eq:variance_in_terms_b_rho_phi}, we obtain the following important identity
\begin{multline} \label{eq:variance_almost_there}
	\frac{1}{q^{nw+h+1}} \mathrm{Var}_{A,n} \left[ \nu_\rho(A;h) \right] = \frac{1}{q^{n-h-1}}\sum_{\varphi \in \Phi^*(Q)^{ev}_{\rho \; \mathrm{good}}} \norm{\mathrm{Tr}(\theta_{\rho,\varphi}^n)}^2 \\
	+ \frac{1}{q^{n(w+2)-h-1}} \sum_{\varphi \in \Phi^*(Q)^{ev}_{\rho \; \mathrm{bad}}} \norm{b_{\rho \otimes \varphi,n}}^2 + O\left(q^{h+1-n-\frac{n(1+w)}{2}}\right).
\end{multline}
If we further assume that $\rho$ is $\iota$-pure of $q$-weight $w$ then from Lemma \ref{lem.mixed_chars_finite} we deduce
\begin{equation} \label{eq:variance_good_chars_dominate}
	\frac{1}{q^{nw+h+1}} \mathrm{Var}_{A,n} \left[ \nu_\rho(A;h) \right] = \frac{1}{q^{n-h-1}}\sum_{\varphi \in \Phi^*(Q)^{ev}_{\rho \; \mathrm{good}}} \norm{\mathrm{Tr}(\theta_{\rho,\varphi}^n)}^2 + O\left( \frac{1}{q} \right)
\end{equation}

\section{Equidistribution \& proof of the main theorem}
\begin{definition}
	Let $f \from \U(S) \to \C$ be a  continuous and conjugacy-invariant function. We define the \emph{mean}  of $f$ to be the unique continuous function $\langle f \rangle \from \U(1) \to \C$ satisfying
	\begin{equation}
		\int_{\mathrm{U}(S)} f(g) \psi(\det g) dg = \int_{\mathrm{U}(S)} \langle f \rangle (\det g) \psi(\det g) dg,
	\end{equation}
	for any continuous $\psi \from \U(1) \to \C$.
\end{definition}
As a special case adjusted to our setting, Theorem~1.2 of \cite{sawin2018equidistribution} can be stated as follows.
\begin{theorem}[Sawin] \label{thm:equidistribution}
	Let $\rho$ be a Galois representation, which is pointwise $\iota$-pure of $q$-weight $w$,  let $f \from \U(S) \to \U(1)$ be a continuous, conjugacy-invariant function and let $Q = t^{n-h}$. For $\varphi \in \Phi^*(Q)_{\rho \; \mathrm{good}}^{ev}$ we have representatives $\theta_{\rho,\varphi} \in \U(S)$ of conjugacy classes such that $L_\mathcal{Q}(T,\rho \otimes \varphi) = (1-T) \det(I-q^{\frac{1+w}{2}}\theta_{\rho,\varphi}T)$. 
	
	If $n-h \geq 5$, then
	\begin{equation}
		\lim_{q \to \infty} \left( \frac{\sum_{\varphi \in \Phi^*(Q)^{ev}_{\rho \; \mathrm{good}}} f(\theta_{\rho,\varphi})}{\norm{\Phi^*(Q)_{\rho \; \mathrm{good}}^{ev}}}   - \frac{\sum_{\varphi \in \Phi^*(Q)^{ev}_{\rho \; \mathrm{good}}} \langle f \rangle (\det(\theta_{\rho,\varphi})) }{\norm{\Phi^*(Q)_{\rho \; \mathrm{good}}^{ev}}} \right) = 0.
	\end{equation}
\end{theorem}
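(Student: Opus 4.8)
The plan is to obtain this statement by specialising Sawin's general equidistribution theorem \cite[Theorem~1.2]{sawin2018equidistribution} to our situation and then taking the auxiliary test function to be constant. The only real work is translating the objects into Sawin's language; all of the analytic content is contained in his theorem, which we use as a black box.

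First I would set up the dictionary with Sawin's notation. Since $\rho$ is finitely ramified it yields the middle extension sheaf $\mathrm{ME}(\rho)$ on $\mathbb{A}^1_{\overbar{\mathbb{F}}_q}$, lisse on a dense open subset and, by our standing assumption, pointwise $\iota$-pure of $q$-weight $w$; this is exactly Sawin's input sheaf. A Dirichlet character $\varphi \in \Phi(Q)$ of modulus $Q = t^{n-h}$ corresponds to the rank-one lisse sheaf $\mathcal{L}_\varphi$ of Section~\ref{section.weights_purity}, the hypothesis $n-h \geq 5$ is the size constraint under which Sawin establishes big monodromy, and evenness of $\varphi$ singles out precisely the subfamily that appears in his statement. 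By definition of \emph{good}, the set $\Phi^*(Q)^{ev}_{\rho \; \mathrm{good}}$ consists of the nontrivial even characters for which $M(T,\rho\otimes\varphi) = L_\mathcal{Q}(T,\rho\otimes\varphi)/(1-T)$ is $\iota$-pure of $q$-weight $w+1$ of the generic degree $S = s_\mathcal{Q}(\rho)$, which is the family over which the equidistribution takes place. Via \eqref{eq:twisted_l_cohomological} the polynomial $M(T,\rho\otimes\varphi)$ is the reversed characteristic polynomial of $\Frob_q$ acting on $H_c^1(\mathbb{A}^1_{\overbar{\mathbb{F}}_q}, \mathrm{ME}(\rho)\otimes\mathcal{L}_\varphi)$, so that dividing its eigenvalues by $q^{(1+w)/2}$ recovers precisely the unitary class $\theta_{\rho,\varphi} \in \U(S)$, with $L_\mathcal{Q}(T,\rho\otimes\varphi) = (1-T)\det(I - q^{(1+w)/2}\theta_{\rho,\varphi}T)$.

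With this dictionary in place, Sawin's Theorem~1.2 asserts that for every continuous conjugacy-invariant $f$ and every continuous $\psi \from \U(1) \to \C$ one has
\begin{equation*}
\frac{1}{\norm{\Phi^*(Q)^{ev}_{\rho \; \mathrm{good}}}} \sum_{\varphi \in \Phi^*(Q)^{ev}_{\rho \; \mathrm{good}}} \left( f(\theta_{\rho,\varphi}) - \langle f \rangle(\det\theta_{\rho,\varphi}) \right) \psi(\det\theta_{\rho,\varphi}) \longrightarrow 0
\end{equation*}
as $q \to \infty$, the content being that the classes $\theta_{\rho,\varphi}$ become equidistributed in each fibre of the determinant map $\U(S) \to \U(1)$; this is in turn the formal consequence of the big-monodromy assertion $\mathrm{SL}_S \subseteq G_{\mathrm{geom}}$ for the family of twisted sheaves, combined with Deligne's equidistribution theorem. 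Specialising to $\psi \equiv 1$, and using that our operator $\langle f \rangle$ is, by its defining identity, exactly the conditional expectation of $f$ over the cosets of $\mathrm{SU}(S)$ and hence the averaging operator appearing in Sawin's conclusion, gives precisely the displayed limit.

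I expect the main obstacle to be purely one of bookkeeping: verifying that all of our normalisations agree with Sawin's, namely the scaling by $q^{(1+w)/2}$, the removal of the Euler factor $1-T$ at $t=0$ when passing to the completed $L$-function, the restriction from $\Phi(Q)$ to nontrivial even characters, and the precise meaning of \emph{good} (equivalently, that the exceptional characters discarded here lie among those Sawin must also discard). No new analytic estimate is needed on our side.
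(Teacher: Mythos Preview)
Your proposal is correct and matches the paper's own treatment: the paper does not prove this theorem but simply states it as a specialisation of \cite[Theorem~1.2]{sawin2018equidistribution} to the present setting, and your dictionary (middle extension sheaf $\mathrm{ME}(\rho)$, the correspondence $\varphi \leftrightarrow \mathcal{L}_\varphi$, the identification of $\theta_{\rho,\varphi}$ via \eqref{eq:twisted_l_cohomological}, and the specialisation $\psi\equiv 1$) is exactly the intended translation. You have in fact supplied more detail on the bookkeeping than the paper itself does.
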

In our case we are dealing with $f(g) = \norm{\mathrm{Tr}(g^n)}^2$. The proof of the next Lemma is based on \cite[Lemma 3.3]{gorodetsky2018variance}.
\begin{lemma} \label{lem:mean_of_traces}
	Let $f \from \U(S) \to \C$ be given by $f(g) = \norm{\mathrm{Tr}(g^n)}^2$. Then
	\begin{equation}
		\langle f \rangle (z) = \int_{g \in \U(S)} f(g) dg = \mathrm{min}\{ n,S \},
\end{equation}
for any $z \in \U(1)$.
\end{lemma}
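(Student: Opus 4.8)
The plan is to establish the two assertions separately: that $\langle f \rangle$ does not depend on $z$, and that the resulting constant equals $\min\{n,S\}$.

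\textbf{Constancy of $\langle f \rangle$.} First I would reinterpret $\langle f \rangle$ as a conditional expectation along the determinant map $\det \from \U(S) \to \U(1)$. Since $\det$ pushes the (probability) Haar measure on $\U(S)$ forward to the Haar measure on $\U(1)$, the defining identity $\int_{\U(S)} f(g)\psi(\det g)\,dg = \int_{\U(S)} \langle f\rangle(\det g)\psi(\det g)\,dg$ forces $\langle f\rangle(z) = \mathbb{E}[f(g)\mid \det g = z]$, i.e.\ $\langle f\rangle\circ\det$ is the $L^2(\U(S),dg)$-projection of $f$ onto the functions of $\det g$. Next I would exploit the scaling symmetry: for $\zeta \in \U(1)$ the scalar matrix $\zeta I$ is central, so $(\zeta I\,g)^n = \zeta^n g^n$ and hence $f(\zeta I\,g) = \norm{\zeta^n}^2\norm{\mathrm{Tr}(g^n)}^2 = f(g)$; on the other hand $g \mapsto \zeta I\,g$ preserves Haar measure and sends the fibre $\{\det g = z\}$ to $\{\det g = \zeta^S z\}$. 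Therefore $\langle f\rangle(z) = \langle f\rangle(\zeta^S z)$ for every $\zeta$, and since $\zeta \mapsto \zeta^S$ is surjective onto $\U(1)$ this shows $\langle f\rangle$ is constant. Denoting this constant by $c$ and taking $\psi \equiv 1$ in the defining relation yields $c = \int_{\U(S)} f(g)\,dg$, which is the middle expression in the statement.

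\textbf{Evaluating the integral.} It remains to compute $\int_{\U(S)} \norm{\mathrm{Tr}(g^n)}^2\,dg = \min\{n,S\}$, which is exactly \cite[Theorem 2.1]{diaconis2001linear} (originally due to Diaconis and Shahshahani), and one can simply quote it. Alternatively one can argue directly: $\mathrm{Tr}(g^n)$ is the power-sum symmetric function $p_n$ in the eigenvalues of $g$, the Frobenius character formula gives the hook expansion $p_n = \sum_{k=0}^{n-1}(-1)^k s_{(n-k,1^k)}$ into Schur functions, and on $\U(S)$ the Schur function $s_\lambda$ is an irreducible character when $\lambda$ has at most $S$ parts and vanishes otherwise. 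Since $(n-k,1^k)$ has $k+1$ parts, exactly $\min\{n,S\}$ of the summands survive, and as distinct Schur functions are orthonormal in $L^2(\U(S),dg)$ one gets $\int_{\U(S)}\norm{p_n}^2\,dg = \min\{n,S\}$.

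No serious obstacle is anticipated; the lemma is essentially a repackaging of the Diaconis--Shahshahani moment computation together with the elementary scaling invariance of $\norm{\mathrm{Tr}(g^n)}^2$. The only point demanding a little care is the constancy step: one must multiply by a \emph{full} scalar matrix $\zeta I$ (so the determinant is scaled by $\zeta^S$, which ranges over all of $\U(1)$) rather than merely by an $n$-th root of unity, and one must record the routine fact that $\det$ pushes Haar measure to Haar measure so that the conditional-expectation interpretation of $\langle f\rangle$ is legitimate.
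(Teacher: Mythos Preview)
Your proof is correct and follows essentially the same approach as the paper: both arguments rest on the scaling invariance $f(\zeta g)=f(g)$ for $\zeta\in\U(1)$ combined with Haar-measure invariance, and both defer the evaluation of the integral to \cite[Theorem 2.1]{diaconis2001linear}. The only cosmetic difference is that you package the first step as constancy of the conditional expectation along $\det$, whereas the paper verifies the defining identity directly against the Fourier basis $\psi(z)=z^k$ via Stone--Weierstrass; your additional Schur-function computation is a bonus the paper omits.
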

\begin{proof}
	By definition of $\langle f \rangle$ we need to show 
	\begin{equation}
		\int_{\U(S)} f(g) \psi(\det g) dg = \left(\int_{\U(S)} f(g) dg \right) \left( \int_{\U(S)}  \psi(\det g) dg \right),
	\end{equation}
	for any continuous function $\psi \from \U(1) \to \C$. By the complex Stone-Weierstrass theorem, polynomials are dense in the space of continuous functions on the compact set $\U(1) \subset \C$, so that it is enough to show the above equality for all $\psi(z) = z^k$, $k \in \mathbb{Z}$. Since the Haar measure has total unit mass, the case $k=0$ is trivial. 
	
	Note that $f(g) = \norm{\mathrm{Tr}(g^n)}^2 = \mathrm{Tr}(g^n) \overbar{\mathrm{Tr}(g^n)}$ is a Laurent polynomial in the eigenvalues of $g$ of degree $0$. This implies, that for any $\lambda \in \U(1)$ we have $f(\lambda g) = f(g)$. If $k \neq 0$ by translation invariance of the Haar measure we have for any $\lambda \in \U(1)$,
	\begin{equation}
		\int_{ \U(S)} f(g) (\det g)^k dg = \int_{\U(S)} f(\lambda g) (\det \lambda g)^k dg = \lambda^{kR} \int_{ \U(S)} f(g) (\det g)^k dg,
	\end{equation}
	so that considering $\lambda \neq 1$ implies
	\begin{equation}
		\int_{ \U(S)} f(g) (\det g)^k dg = \left(\int_{\U(S)} f(g) dg \right) \left( \int_{\U(S)}  (\det g)^k dg \right) = 0.
	\end{equation}
The last part of the proof follows, since a standard matrix integral evaluation shows
\begin{equation}
	\int_{\U(S)} \norm{\mathrm{Tr}(g^n)}^2 dg = \mathrm{min}\{ n,S \}.
\end{equation}
A proof of this fact can be found in \cite[Theorem 2.1]{diaconis2001linear}.
\end{proof}
We are now finally in a position to state and prove the main result of this chapter.
\begin{theorem} \label{thm:main_result_full_form}
	Let $\rho$ be a Galois representation, which is pointwise $\iota$-pure of $q$-weight $w$. Let $A$ be polynomials of degree $n$, and let $h$ be a positive integer such that $h \leq n-5$. For $Q = t^{n-h}$, assume further that $\Phi(Q)^{ev}_{\rho \; \mathrm{heavy}} \subseteq \{ \varphi_{tr} \}$. Then
	\begin{equation}
		\lim_{q \to \infty} \frac{1}{q^{nw+h+1}} \mathrm{Var}_{A,n} \left[ \nu_\rho(A;h) \right] = \mathrm{min}\{n,s_\mathcal{Q}(\rho) \}.
	\end{equation}
\end{theorem}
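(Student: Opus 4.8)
The plan is to feed the exact identity \eqref{eq:variance_in_terms_b_rho_phi} into Sawin's equidistribution theorem. I would start from \eqref{eq:variance_almost_there}, which already records that after division by $q^{nw+h+1}$ the good characters contribute $\frac{1}{q^{n-h-1}}\sum_{\varphi\in\Phi^{*}(Q)^{ev}_{\rho\;\mathrm{good}}}\norm{\mathrm{Tr}(\theta_{\rho,\varphi}^{n})}^{2}$, and whose remaining terms are a sum over bad characters with prefactor $q^{-(n(w+2)-h-1)}$ together with an explicit $O(q^{h+1-n-n(1+w)/2})$. The first step is to discard both of these. By the hypothesis $\Phi(Q)^{ev}_{\rho\;\mathrm{heavy}}\subseteq\{\varphi_{tr}\}$ and $\varphi_{tr}\notin\Phi^{*}(Q)^{ev}$, every bad character in $\Phi^{*}(Q)^{ev}$ is mixed; by Lemma~\ref{lem.mixed_chars_finite} there are only $O(1)$ of them, and for each $L_{\mathcal{Q}}(T,\rho\otimes\varphi)/(1-T)$ is $\iota$-mixed of $q$-weights $\leq w+1$, so in $L_{\mathcal{Q}}(T,\rho\otimes\varphi)=(1-T)\prod_{i}(1-\alpha_{i}T)$ one has $\norm{\alpha_{i}}\leq q^{(1+w)/2}$. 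Since $b_{\rho\otimes\varphi,n}=-1-\sum_{i}\alpha_{i}^{n}$ and the number of $\alpha_{i}$ is bounded uniformly in $q$ and in $\varphi$ (by the Grothendieck--Ogg--Shafarevich/Euler--Poincar\'e bound for $\deg L_{\mathcal{Q}}(T,\rho\otimes\varphi)$, whose conductor is controlled once $n-h$ and $\rho$ are fixed), this gives $\norm{b_{\rho\otimes\varphi,n}}=O(q^{n(1+w)/2})$, so the bad sum is $O(q^{h+1-n})$; and since $w\geq 0$ the explicit error term is also $O(q^{h+1-n})$. Both are $o(1)$ because $h\leq n-5$, which yields \eqref{eq:variance_good_chars_dominate}.

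It then remains to evaluate $\lim_{q\to\infty}\frac{1}{q^{n-h-1}}\sum_{\varphi\in\Phi^{*}(Q)^{ev}_{\rho\;\mathrm{good}}}\norm{\mathrm{Tr}(\theta_{\rho,\varphi}^{n})}^{2}$, which I would write as a count times an average,
\[
\frac{1}{q^{n-h-1}}\sum_{\varphi\in\Phi^{*}(Q)^{ev}_{\rho\;\mathrm{good}}}\norm{\mathrm{Tr}(\theta_{\rho,\varphi}^{n})}^{2}=\frac{\norm{\Phi^{*}(Q)^{ev}_{\rho\;\mathrm{good}}}}{q^{n-h-1}}\cdot\frac{1}{\norm{\Phi^{*}(Q)^{ev}_{\rho\;\mathrm{good}}}}\sum_{\varphi\in\Phi^{*}(Q)^{ev}_{\rho\;\mathrm{good}}}\norm{\mathrm{Tr}(\theta_{\rho,\varphi}^{n})}^{2}.
\]
For the count: since $Q=t^{n-h}$ one has $\norm{\Gamma(Q)}=q^{n-h-1}(q-1)$, so Lemma~\ref{lem:even_characters} gives $\norm{\Phi(Q)^{ev}}=q^{n-h-1}$, hence $\norm{\Phi^{*}(Q)^{ev}}=q^{n-h-1}-1$; subtracting the $O(1)$ bad characters leaves $\norm{\Phi^{*}(Q)^{ev}_{\rho\;\mathrm{good}}}=q^{n-h-1}-O(1)$, so the count factor tends to $1$.

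For the average factor I would apply Theorem~\ref{thm:equidistribution} to the continuous conjugacy-invariant function $f(g)=\norm{\mathrm{Tr}(g^{n})}^{2}$: the average of $f(\theta_{\rho,\varphi})$ over $\Phi^{*}(Q)^{ev}_{\rho\;\mathrm{good}}$ has the same $q\to\infty$ limit as the average of $\langle f\rangle(\det\theta_{\rho,\varphi})$. By Lemma~\ref{lem:mean_of_traces} the mean $\langle f\rangle$ is the \emph{constant} function equal to $\int_{\U(S)}\norm{\mathrm{Tr}(g^{n})}^{2}\,dg=\min\{n,S\}$ with $S=s_{\mathcal{Q}}(\rho)$; here one invokes \cite[Theorem~1.2]{sawin2018equidistribution}, which pins down $\deg(L_{\mathcal{Q}}(T,\rho\otimes\varphi)/(1-T))=s_{\mathcal{Q}}(\rho)$ for all but $O(q^{n-h-2})$ of the good characters, while for the remaining $O(q^{n-h-2})$ good characters $\norm{\mathrm{Tr}(\theta_{\rho,\varphi}^{n})}^{2}$ is still $O(1)$ uniformly, so they alter the average only by $o(1)$. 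Hence the average tends to $\min\{n,s_{\mathcal{Q}}(\rho)\}$, and multiplying by the count factor $1$ gives $\lim_{q\to\infty}q^{-(nw+h+1)}\mathrm{Var}_{A,n}[\nu_{\rho}(A;h)]=\min\{n,s_{\mathcal{Q}}(\rho)\}$.

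The genuinely deep inputs — Sawin's equidistribution theorem and the unitary integral $\int_{\U(S)}\norm{\mathrm{Tr}(g^{n})}^{2}\,dg=\min\{n,S\}$ of \cite[Theorem~2.1]{diaconis2001linear} — are quoted wholesale, and the character count is routine. I expect the main obstacle to be the bookkeeping that isolates exactly the good characters of degree $s_{\mathcal{Q}}(\rho)$: one needs both that the non-good characters number $o(q^{n-h-1})$ (this is where the hypothesis on heavy characters and Lemma~\ref{lem.mixed_chars_finite} enter) and a uniform-in-$\varphi$ bound on $\deg L_{\mathcal{Q}}(T,\rho\otimes\varphi)$, so that no exceptional term can be large enough to influence the limit. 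Once these two points are secured, the conclusion falls out of Theorem~\ref{thm:equidistribution} and Lemma~\ref{lem:mean_of_traces}.
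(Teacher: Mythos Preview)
Your proposal is correct and follows essentially the same route as the paper's proof: reduce to good characters via \eqref{eq:variance_good_chars_dominate} (which you re-derive from \eqref{eq:variance_almost_there} using Lemma~\ref{lem.mixed_chars_finite} and the heavy-character hypothesis), use the count $\norm{\Phi^{*}(Q)^{ev}_{\rho\;\mathrm{good}}}\sim q^{n-h-1}$, and then feed $f(g)=\norm{\mathrm{Tr}(g^{n})}^{2}$ into Theorem~\ref{thm:equidistribution} together with Lemma~\ref{lem:mean_of_traces}. If anything you are slightly more explicit than the paper about the degree-$S$ bookkeeping (the paper addresses this in the earlier remark rather than in the proof) and about the uniform bound on $\norm{b_{\rho\otimes\varphi,n}}$ for mixed characters.
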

\begin{proof}
	Write $Q=t^{n-h}$. From Lemma \ref{lem.mixed_chars_finite} and the assumption $\Phi(Q)^{ev}_{\rho \; \mathrm{heavy}} \subseteq \{ \varphi_{tr} \}$ we find
	\begin{equation}
		q^{n-h-1} = \norm{\Phi(Q)^{ev}} \sim \norm{\Phi^*(Q)^{ev}_{\rho \; \mathrm{good}}}.
	\end{equation} 
	Using this and \eqref{eq:variance_good_chars_dominate} thus gives
	\begin{equation}
		\lim_{q \to \infty}\frac{1}{q^{nw+h+1}} \mathrm{Var}_{A,n} \left[ \nu_\rho(A;h) \right] = \lim_{q \to \infty} \frac{1}{\norm{\Phi^*(Q)^{ev}_{\rho \; \mathrm{good}}}}\sum_{\varphi \in \Phi^*(Q)^{ev}_{\rho \; \mathrm{good}}} \norm{\mathrm{Tr}(\theta_{\rho,\varphi}^n)}^2.
	\end{equation}
	Note that all conditions of Theorem~\ref{thm:equidistribution} are satisfied. Thus, by the result of Lemma \ref{lem:mean_of_traces}, applying Theorem \ref{thm:equidistribution} we obtain
	\begin{equation}
		\lim_{q \to \infty} \frac{1}{\norm{\Phi^*(Q)^{ev}_{\rho \; \mathrm{good}}}}\sum_{\varphi \in \Phi^*(Q)^{ev}_{\rho \; \mathrm{good}}} \norm{\mathrm{Tr}(\theta_{\rho,\varphi}^n)}^2  = \mathrm{min}\{ n,s_\mathcal{Q}(\rho) \},
	\end{equation}
	which completes the proof.
\end{proof}

\bibliographystyle{alpha}
\bibliography{variance_gen_von_Mangoldt.bib}

\end{document}